\newtheorem{theorem}{Theorem}[section]
\newtheorem{lemma}{Lemma}[section]
\newtheorem{proposition}{Proposition}[section]
\newtheorem{remark}{Remark}[section]
\numberwithin{equation}{section}
\newcommand{\cop}{^{C}\!\partial_t^{\alpha}}
\newcommand{\op}{\partial_t^{\alpha}}
\newcommand{\dop}{\partial_\tau^{\alpha}}
\newcommand{\dopm}{\partial_\tau^{-\alpha}}
\newcommand{\cL}{\mathcal{L}}
\newcommand{\bv}{{\bf w}}
\newcommand{\bs}{{\boldsymbol\sigma}}
\newcommand{\bz}{{\boldsymbol\zeta}}
\newcommand{\bx}{{\boldsymbol\xi}}
\newcommand{\bV}{{\bf W}}
\newcommand{\eh}{{\ell_h(\nu)}}
\title{Galerkin Type Methods for Semilinear Time-Fractional Diffusion Problems}
\author{
Samir Karaa\thanks{Email: skaraa@squ.edu.om. This research was supported by the Research Council of Oman grant
RC/SCI/DOMS/16/01.} 
\\
Department of Mathematics, Sultan Qaboos University\\
Al-Khod 123, Muscat, Oman
}
\begin{document}
\date{}
\maketitle

\begin{abstract}
We derive optimal $L^2$-error estimates for semilinear time-fractional subdiffusion problems involving  Caputo derivatives in time of order $\alpha\in (0,1)$, for cases with smooth and nonsmooth initial data. A  general framework is introduced allowing a unified error analysis of Galerkin type space approximation methods.
The analysis is based on a semigroup type approach and exploits the properties of the inverse of the associated elliptic operator.  Completely discrete schemes are analyzed in the same framework using a backward Euler convolution quadrature method in time. Numerical examples including conforming, nonconforming and mixed finite element  (FE) methods are presented to illustrate the theoretical results.
\end{abstract}

{\small{\bf Key words.} semilinear fractional diffusion, Galerkin method, nonconforming FE method, mixed FE method,  convolution quadrature, error estimate

\medskip
{\small {\bf AMS subject classifications.} 65M60, 65M12, 65M15}

\section{Introduction}

The purpose of this paper is to discuss some aspects of the numerical solution of the semilinear 
time-fractional initial boundary value problem
\begin{equation}\label{a}
\cop u +\cL u=f(x,t,u)\; \mbox{ in } \Omega\times (0,T_0],\quad u(x,0)=u_0(x)\; \mbox{ in }  \Omega,
\end{equation}
subject to a homogeneous Dirichlet boundary condition, where $\Omega\subset \mathbb{R}^d$ ($d\geq 2$) is a bounded convex polyhedral domain with a boundary $\partial \Omega$ and  $T_0>0$ is a fixed time. Here $u_0$ is a given initial data 
and $f$ is a smooth function of its arguments satisfying
\begin{equation} \label{Lip}
\sup_{x\in\Omega,t\in (0,T_0)}\big(|\partial_t f(x,t,u)|+|\partial_u f(x,t,u)|\big)\leq L \quad \forall u\in \mathbb{R}.
\end{equation} 
The operator $\cL$ is defined by $\cL u = -\mbox{div} [A(x)\nabla u]+\kappa(x)u$, where $A(x)=[{a_{ij}(x)}]$ is a  $d\times d$ symmetric and  uniformly positive definite in $\bar\Omega$ matrix, and   $\kappa\in L^\infty(\Omega)$ is nonnegative. The coefficients  $a_{ij}$ and $\kappa$ are assumed to be sufficiently smooth on $\bar{\Omega}$.
The operator $\cop$ is the Caputo fractional derivative in time of order $\alpha\in(0,1)$  defined by
\begin{equation} \label{Ba}
\cop\varphi(t)=\frac{1}{\Gamma(1-\alpha)}\int_0^t(t-s)^{-\alpha}\partial_s\varphi(s)\,ds,\quad 0<\alpha<1,
\end{equation}
where $\partial_s\varphi=\partial \varphi/\partial s$ and $\Gamma(\cdot)$ denotes the usual Gamma function. 
As $\alpha \rightarrow 1^-$,  $\cop$  converges
to $\partial_t$, and thus, problem \eqref{a} reduces to the standard semilinear parabolic problem \cite{KST-2006}. 
 
Let $(\cdot,\cdot)$ denote the inner product in $L^2(\Omega)$ with induced norm $\|\cdot\|$. 
Since $\Omega$ is convex,  the solution of the elliptic problem $\cL u=f$ in $\Omega$, with $u=0$ on $\partial \Omega$ and $f\in L^2(\Omega)$,  belongs to $H^2(\Omega)$. With $\mathcal{D}(\cL)=H^2(\Omega)\cap H^1_0(\Omega)$, recall that the operator  $\cL:\mathcal{D}(\cL)\to L^2(\Omega)$ is selfadjoint, positive definite and has a compact inverse. Let $\{\lambda_j,\varphi_j\}_{j=1}^\infty$ denotes the eigenvalues and  eigenfunctions of $\cL$ with $\{\varphi_j\}_{j=1}^\infty$ an orthonormal basis in $L^2(\Omega)$. By spectral method, the fractional powers of $\cL$ are defined by
$$
\cL^\nu v = \sum_{j=1}^\infty \lambda_j^\nu (v,\varphi_j)\varphi_j, \quad\nu>0,
$$
with domains $\mathcal{D}(\cL^\nu)=\{v\in L^2(\Omega): \|\cL^\nu v\|<\infty\}$. Note that  
$\{\mathcal{D}(\cL^\nu)\}$ form a Hilbert scale of interpolation spaces and  
$\mathcal{D}(\cL)\subset \mathcal{D}(\cL^\nu)\subset\mathcal{D}(\cL^\beta)
\subset\mathcal{D}(\cL^0)= L^2(\Omega)$ with continuous and compact embeddings for $0<\beta<\nu<1$.

The regularity of the solution in \eqref{a} plays a key role in our error analysis.
For initial data $u_0\in \mathcal{D}(\cL^\nu)$, $\nu\in (0,1]$, 
 problem  \eqref{a}  has a unique solution  $u$ satisfying  \cite[Theorem 3.1]{Karaa-2018b}:
\begin{equation}\label{regularity-1}
u\in C^{\alpha \nu}([0,T_0];L^2(\Omega))  \cap  C([0,T_0];\mathcal{D}(\cL^\nu))
\cap   C((0,T_0];\mathcal{D}(\cL)),
\end{equation}
\begin{equation}\label{regularity-1b}
 \cop u\in C((0,T_0];L^2(\Omega)),
\end{equation}
\begin{equation}\label{regularity-1c}
\partial_t u(t)\in L^2(\Omega) \quad \mbox{and}\quad    \|\partial_t u(t)\|\leq c t^{\alpha \nu -1}, \;\; \quad t\in (0,T_0].
\end{equation}
The results show that the solution of the semilinear problem \eqref{a} enjoys  (to some extent)  smoothing properties analogous to those of the homogeneous linear problem. 
For $u_0\in L^2(\Omega)$, it is shown that (\cite[Theorem 3.2]{Karaa-2018b})
\begin{equation}\label{regularity-1d}
u\in C([0,T_0];L^2(\Omega))  \cap  L^\gamma(0,T_0;\mathcal{D}(\cL)), \quad \gamma <1/\alpha.
\end{equation}
Note that the first time derivative of $u$ is not smooth enough in space even in the case of a  smooth initial data. This actually causes a major difficulty in deriving optimal error estimates based on standard techniques, such as the energy method.


The numerical approximation of fractional differential equations  has received considerable attention
over the  last two decades. For linear time-fractional equations,  a vast literature  is now available. See the short list \cite{MT2010,McLeanThomee2010,  JLZ2015, JLZ2016,Karaa-2018,KP-2018} on problems with nonsmooth data 
and \cite{JLZ2019} for a concise overview and recent developments.
In contrast, numerical studies on  nonlinear time-fractional evolution problems are rather limited.
 In \cite{LWZ-2017}, a linearized $L^1$-Galerkin FEM was proposed to solve a  nonlinear time-fractional Schr\"odinger equation. In \cite{LLSWZ-2018},  $L^1$-type schemes have been analyzed for approximating the solution of \eqref{a}.  The error estimates in \cite{LWZ-2017} and \cite{LLSWZ-2018} are derived under high regularity assumptions on the exact solution, so the limited smoothing property of the model \eqref{a} was not taken into consideration.   In \cite{JLZ-2018}, the numerical solution of \eqref{a} was investigated assuming that  the nonlinearity  $f$ is  uniformly Lipschitz  in $u$ and the  initial data $u_0\in \mathcal{D}(\cL)$. Error estimates are established for linearized time-stepping schemes based on the $L^1$-method and a convolution quadrature generated by the backward Euler difference formula. 
In the recent paper \cite{Karaa-2018b}, we derived error estimates for the same problem with initial data $u_0\in \mathcal{D}(\cL^\nu)$, $\nu\in (0,1]$. 
The new  estimates extend known results obtained for the standard semilinear parabolic problem \cite{JLTW-1987}. For other types of time-fractional problems, one may refer to \cite{Lubich-2006} for  fractional diffusion-wave equations
and to \cite{MM-2010} for an integro-differential equation.


In this paper, we approximate the solution of the semilinear problem \eqref{a} by general Galerkin type approximation methods in space  and a convolution quadrature in time. Our aim is to develop a unified error analysis with optimal error estimates with respect to the data regularity. We shall follow a semigroup type approach and make use of the inverse of the associated elliptic operator \cite{BSTW-1977}. 
The current study extends the recent work \cite{Karaa-2018} dealing with the homogeneous linear problem, which relied on the energy technique. 
Our analysis includes conforming, nonconforming and mixed FEMs, and the results are applicable to  nonlinear multi-term diffusion problems. 
It is worth noting that most of our results hold in the limiting case $\alpha=1$, i.e.,
our study also generalizes the work \cite{BSTW-1977}. Particularly interesting are the 
estimates derived for the mixed FEM, which are new and have not been established earlier.

%

The paper is organized as follows. In section \ref{sec:AB}, a general setting of the problem is introduced and preliminary error estimates are derived, which require regularity properties analogous to those of the homogeneous linear problem. 
In section \ref{sec:RA}, an alternative error estimation is proposed without a priori regularity assumptions on the exact solution. Time-stepping schemes based on a backward Euler convolution quadrature method are analyzed in section \ref{sec:FD}. 
Applications are presented in section  \ref{sec:Appli}.
The mixed form of problem \eqref{a} is considered in section \ref{sec:Mixed} and related convergence rates
are obtained. Finally, numerical results are provided to validate the theoretical  findings.

Throughout the paper, we denote by $c$ a constant which may vary at different occurrences, but is  always independent of the mesh size $h$ and the time step size $\tau$. We shall also use the abbreviation $f(u)$ and $f(t)$ for $f(x,t,u)$ and $f(x,t)$, respectively.


\section{General setting and preliminary estimates}\label{sec:AB}
%
Set $T=\cL^{-1}$. Then,  
 $T:L^2(\Omega)\to \mathcal{D}(\cL)$  is compact, selfadjoint and positive definite.
In terms of $T$, we may write \eqref{a} as
\begin{equation}\label{pr-1}
T\cop u +u = Tf(u),\quad t>0,\quad u(0)=u_0.
\end{equation}
For the purpose of approximating the solution of this problem,  let  $V_h\subset L^2(\Omega)$ be a family of finite-dimensional spaces  that depends on $h$, $0<h<1$. We assume that we are given a corresponding family of linear operators $T_h:L^2(\Omega)\to V_h$ which approximate $T$. Then consider the semidiscrete problem: find $u_h(t)\in V_h$ for $t\geq 0$ such that
 \begin{equation}\label{pr-2}
T_h\cop u_h +u_h = T_hf(u_h),\quad t>0 ,\quad  u_h(0)=u_{0h}\in V_h,
\end{equation} 
where $u_{0h}$ is a suitably chosen  approximation of  $u_0$. In our analysis, we shall make the following assumptions:\\
(i) $T_h$ is selfadjoint,  positive semidefinite on $L^2(\Omega)$ and  positive definite on $V_h$.\\
(ii) $T_hP_h=T_h$, where $P_h:L^2(\Omega)\to V_h$ is the orthogonal $L^2$-projection onto $V_h$.\\
(iii) For some constants $\gamma>0$ and $c>0$, there holds 
\begin{equation}\label{ass-1}
 \|T_hf-Tf\|\leq ch^{\gamma}\|f\|\quad \forall f\in L^2(\Omega). 
\end{equation}

Since $T_h^{-1}$ exists on  $V_h$,  \eqref{pr-2} may  be solved uniquely for $t> 0$.
The following diagram displays the different  links between the operators under consideration:

\vspace*{.1cm}

\begin{center}
\begin{tikzpicture}
  \matrix (m) [matrix of math nodes,row sep=4em,column sep=6.5em,minimum width=2em]
  {
      D(\cL) & L^2(\Omega) \\ 
     V_h & V_h \\};
  \path[-stealth]
  
    (m-1-2) edge  node [above] {$T=\cL^{-1}$} (m-1-1)
            edge node [right] {$P_h$} (m-2-2)
            edge node [above] {$T_h$} (m-2-1)
    (m-1-1) edge node [left] {$R_h$} (m-2-1)
    (m-2-2) edge node [above] {$T_h$} (m-2-1);
\end{tikzpicture}
\end{center}
In the diagram,  the operator $R_h:D(\cL)\to V_h$  is defined by $R_h=T_h\cL$. It is  the analogue of the 
Ritz  elliptic projection in the context of  Galerkin FE methods. 
Note that  $R_hT=T_h$, and 
in view of \eqref{ass-1}, $R_h$ satisfies
\begin{equation}\label{ass-2}
\|R_h v-v\|= \|T_h\cL v-T\cL v\|\leq ch^\gamma\|\cL v\|\quad \forall v\in D(\cL).
\end{equation}
Further, by the definition of $P_h$, we see that
$\|P_h v-v\|\leq \|R_h v-v\| \, \forall v \in \mathcal{D}(\cL)$.

Examples of family $\{T_h\}$ with the above properties are exhibited by the standard Galerkin FE and spectral methods in the case $V_h\subset  H_0^1(\Omega)$, and by  other nonconforming Galerkin  methods in the case $V_h\not\subset  H_0^1(\Omega)$.
The  mixed FE method applied to \eqref{a} is a typical example which has the above properties 
and  will be considered in this study. 

By our assumptions on $T_h$, the operator
$(z^{-\alpha} I+T_h)^{-1}:L^2(\Omega)\to L^2(\Omega)$ satisfies 
\begin{equation}\label{res1}
\|(z^{-\alpha} I+T_h)^{-1}\|\leq M |z|^{\alpha} \quad \forall z\in \Sigma_\theta,
\end{equation}
where  $\Sigma_{\theta}$ is the  sector
$
\Sigma_{\theta}=\{z\in \mathbb{C}, \,z\neq 0,\, |\arg z|< \theta\}
$
with $\theta\in (\pi/2,\pi)$ being fixed and $M$ depends on $\theta$. In \eqref{res1}, and in the sequel,  we keep the same notation $\|\cdot\|$ to denote the operator norm from $L^2(\Omega)\to L^2(\Omega)$. Using that
\begin{equation}\label{expand}
(z^{-\alpha} I+T_h)^{-1}T_h=I- z^{-\alpha} (z^{-\alpha} I+T_h)^{-1},
\end{equation}
we obtain
\begin{equation}\label{res2}
\|(z^{-\alpha} I+T_h)^{-1}T_h\|\leq 1+M \quad \forall z\in \Sigma_\theta.
\end{equation}
Note that \eqref{res1} and \eqref{res2} hold for $T$. 
By means of the Laplace transform, the solution of problem \eqref{pr-2} is represented  by
\begin{equation}\label{form-1}
u_h(t)=E_h(t)u_{0h}+\int_0^t {\bar E}_h(t-s)f(u_h(s))\,ds,\quad t>0.
\end{equation}
The operators $E_h(t):L^2(\Omega)\to L^2(\Omega)$ and $\bar E_h(t):L^2(\Omega)\to L^2(\Omega)$ are defined by
\begin{equation}\label{form-1-pl}
 E_h(t) = \frac{1}{2\pi i}\int_{\Gamma_{\theta,\delta}}e^{zt} z^{-1}K_h(z) \,dz
\quad
\mbox{ and }
\quad 
{\bar E}_h(t) = \frac{1}{2\pi i}\int_{\Gamma_{\theta,\delta}}e^{zt}z^{-\alpha} K_h(z)\,dz,
\end{equation}
respectively, where $K_h(z):=(z^{-\alpha} I+T_h)^{-1}T_h$.
 The contour $\Gamma_{\theta,\delta}=\{\rho e^{\pm i\theta}:\rho\geq \delta\}\cup\{\delta e^{i\psi}: |\psi|\leq \theta\},$ 
with $\theta\in (\pi/2,\pi)$  and $\delta> 0$, is 
oriented with an increasing imaginary part. Similarly, the solution $u$ of problem \eqref{pr-1} 
is given  by
\begin{equation}\label{form-2}
u(t)=E(t)u_{0}+\int_0^t {\bar E}(t-s)f(u(s))\,ds,\quad t>0,
\end{equation}
where the operators $E$ and $\bar E$ are respectively defined in terms of $K(z):=(z^{-\alpha} I+T)^{-1}T$ as in \eqref{form-1-pl}.  Standard arguments show that (see for instance \cite{LST-1996})
\begin{equation}\label{E-1}
\|{ E}(t)v\|+\|{E}_h(t)v\| + t^{1-\alpha}\left(\|{\bar E}(t)v\|+\|{\bar E}_h(t)v\|\right)\leq c \|v\|\quad \forall v\in L^2(\Omega).
\end{equation}
 

Now let $e(t)=u_h(t)-u(t)$ denote the error at time $t$. Define the intermediate solution 
$v_h(t)\in V_h$, $t\geq 0$,  by
\begin{equation}\label{pr-3}
T_h\cop v_h +v_h = T_h f(u),\quad t>0 ,\quad  v_h(0)=u_{0h}.
\end{equation}
Then, by  splitting the error $e = (u_h-v_h)+(v_h-u)=:\eta+\xi$, and
 subtracting \eqref{pr-3} from  \eqref{pr-1}, we find that  $\xi$ satisfies 
\begin{eqnarray}\label{IVP-nn}
 T_h\cop \xi(t)+\xi(t)= (T_h-T)(f(u)-\cop u)(t),\quad t>0.
\end{eqnarray}
With 
\begin{equation} \label{rho}
\rho(t) := (T_h-T)(f(u)-\cop u)(t), 
\end{equation}
we thus obtain
\begin{equation} \label{eq:T_h}
T_h\cop \xi(t) +\xi(t) = \rho(t),\quad  t>0, \quad \xi(0)\in L^2(\Omega).
\end{equation}

Before proving the main result of this section, we  recall  the following lemma   which  generalizes  the classical Gronwall's inequality, see \cite{CTW-1992}.
\begin{lemma}\label{Gronwall}
Assume that $y$ is a nonnegative function in $L^1(0,T_0)$ which satisfies
$$
y(t) \leq g(t)+\beta\int_0^t(t-s)^{-\alpha}y(s)\,ds\quad \mbox{ for } t\in (0,T_0],
$$
where $g(t)\geq 0$, $\beta\geq 0$, and $0<\alpha<1$. Then there exists a constant $C_{T_0}$ such that 
$$
y(t) \leq g(t)+ C_{T_0}\int_0^t(t-s)^{-\alpha}g(s)\,ds\quad \mbox{ for }  
t\in (0,T_0].
$$
\end{lemma}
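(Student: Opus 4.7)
The plan is to iterate the integral inequality and compute explicitly the iterates of the associated integral operator, then sum the resulting series. Define the linear positivity-preserving operator
\begin{equation*}
(K\phi)(t) = \beta\int_0^t(t-s)^{-\alpha}\phi(s)\,ds,
\end{equation*}
so the hypothesis reads $y \le g + Ky$ pointwise a.e. Applying $K$ to both sides repeatedly and using $K$-monotonicity, I obtain by induction
\begin{equation*}
y(t) \le \sum_{k=0}^{n-1} K^k g(t) + K^n y(t)\qquad\text{for every } n\ge 1.
\end{equation*}

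The key computation is the explicit form of $K^k$. Using the Beta integral (change of variables $r = s+u(t-s)$ in the iterated integral), one establishes by induction that
\begin{equation*}
K^k g(t) = \frac{[\beta\,\Gamma(1-\alpha)]^k}{\Gamma(k(1-\alpha))}\int_0^t (t-s)^{k(1-\alpha)-1} g(s)\,ds,\qquad k\ge 1.
\end{equation*}
The same formula, applied to $y$, gives for $n$ large enough that $n(1-\alpha)\ge 1$,
\begin{equation*}
0 \le K^n y(t) \le \frac{[\beta\,\Gamma(1-\alpha)]^n\, T_0^{n(1-\alpha)-1}}{\Gamma(n(1-\alpha))}\,\|y\|_{L^1(0,T_0)},
\end{equation*}
and the right-hand side tends to $0$ as $n\to\infty$ because $\Gamma(n(1-\alpha))$ grows faster than any geometric rate. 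Passing to the limit yields
\begin{equation*}
y(t) \le g(t) + \int_0^t \kappa(t,s)\,g(s)\,ds,\qquad \kappa(t,s) := \sum_{k=1}^{\infty}\frac{[\beta\,\Gamma(1-\alpha)]^k}{\Gamma(k(1-\alpha))}(t-s)^{k(1-\alpha)-1}.
\end{equation*}

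It remains to show $\kappa(t,s) \le C_{T_0}(t-s)^{-\alpha}$. Factoring out $(t-s)^{-\alpha}$ and reindexing with $j=k-1$,
\begin{equation*}
\kappa(t,s) = \beta\,\Gamma(1-\alpha)\,(t-s)^{-\alpha}\sum_{j=0}^{\infty}\frac{\bigl[\beta\,\Gamma(1-\alpha)(t-s)^{1-\alpha}\bigr]^{j}}{\Gamma((j+1)(1-\alpha))}.
\end{equation*}
The series is a Mittag-Leffler-type entire function of $(t-s)^{1-\alpha}$, hence bounded on $[0,T_0]$ by a constant $C_{T_0}$ depending only on $\alpha$, $\beta$ and $T_0$. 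Substituting back gives the stated inequality.

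The main obstacle is the bookkeeping in the two technical estimates: establishing the closed form of $K^k$ via the Beta identity, and verifying that the tail $K^n y(t)$ vanishes in the limit (which requires $y \in L^1$ and the superexponential growth of $\Gamma$). Both are mechanical once one writes out the iterated convolution, so the proof reduces to carefully executing these calculations and summing the resulting Mittag-Leffler-type series.
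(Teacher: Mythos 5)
Your proof is correct. The paper does not actually prove this lemma --- it is quoted from \cite{CTW-1992} --- and your argument (iterating the positivity-preserving operator $K$, computing the $k$-fold iterate of the weakly singular kernel via the Beta integral, killing the remainder $K^n y$ using $y\in L^1$ and the superexponential growth of $\Gamma(k(1-\alpha))$, and bounding the resulting Mittag--Leffler-type series by a constant times $(t-s)^{-\alpha}$ on $[0,T_0]$) is precisely the standard proof of this weakly singular Gronwall inequality as given in that reference, so nothing further is needed.
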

Note that, by using \eqref{form-2},  \eqref{E-1} and the inequality
\begin{equation}\label{stab}
\|f(u(t))\|\leq \|f(u(t))-f(0)\|+\|f(0)\|\leq L\|u\|+\|f(0)\|, \quad t\geq 0,
\end{equation}
Lemma \ref{Gronwall} implies that
$\|u(t)\|\leq c(\|u_0\|+\|f(0)\|)$ for $t\geq 0$ with $c=c(\alpha,L,T_0)$.
Now we are ready to prove an error estimate for problem \eqref{pr-1}. Here $\rho$ is given by \eqref{rho} and 
$\tilde \rho(t):=\int_0^t\rho(s)\,ds$.
\begin{lemma}\label{lem:main-1}
Let $u$  and $u_h$ be the solutions of \eqref{pr-1} and \eqref{pr-2}, respectively. Assume that 
$T_h(u_0-u_{0h})=0$. Then, for $t>0$,  
\begin{equation} \label{eq:error-1}
\|e(t)\|\leq c \left(G(t)+\int_0^t(t-s)^{\alpha-1}G(s)\,ds\right),
\end{equation}
where
\begin{equation} \label{xi-1}
G(t)= t^{-1}\sup_{s\leq t} ( \|\tilde\rho(s)\|+s\|\rho(s)\|+ s^2\|\rho_t(s)\|),
\end{equation}
and  $c$ is independent of $h$.
\end{lemma}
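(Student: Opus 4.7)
The plan is to split the error as $e = \eta + \xi$ with $\eta = u_h - v_h$ and $\xi = v_h - u$, bound $\xi$ directly in terms of the data $\rho$ via a Laplace representation, obtain a Volterra-type inequality for $\eta$ using the Lipschitz property of $f$, and close the argument with the generalized Gronwall Lemma~\ref{Gronwall}. The splitting is standard, and the assumption $T_h(u_0 - u_{0h}) = 0$ is needed to kill the initial datum in the equation for $\xi$.

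For $\xi$, I would first take the Laplace transform of $T_h\cop\xi + \xi = \rho$ using $T_h\xi(0)=0$, giving $\hat\xi(z) = z^{-\alpha}(z^{-\alpha}I + T_h)^{-1}\hat\rho(z)$. By contour inversion, $\xi(t) = \int_0^t \bar F_h(t-s)\rho(s)\,ds$ where $\bar F_h$ is the operator-valued kernel with Laplace transform $F_h(z) := z^{-\alpha}(z^{-\alpha}I+T_h)^{-1}$. The uniform bound $\|F_h(z)\| \le M$ and the resolvent estimate \eqref{res1} yield, by the usual scaling argument with $\delta = 1/t$ on $\Gamma_{\theta,\delta}$, the kernel bounds
\begin{equation*}
\|\bar F_h(t)\| \le c t^{-1}, \qquad \|\bar F_h'(t)\| \le c t^{-2}, \qquad \Bigl\| \int_0^t \bar F_h(s)\,ds\Bigr\| \le c,
\end{equation*}
uniformly in $h$. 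Because the direct estimate $\|\bar F_h(t-s)\|\|\rho(s)\|$ is non-integrable, I would split $\xi(t) = \xi_I(t)+\xi_{II}(t)$ at $s=t/2$. On $[0,t/2]$ the kernel is smooth: integrating by parts once transfers the $s$-derivative onto $\rho$, producing $\bar F_h(t/2)\tilde\rho(t/2)$ and $\int_0^{t/2}\bar F_h'(t-s)\tilde\rho(s)\,ds$, both bounded by $ct^{-1}\sup_{s\le t}\|\tilde\rho(s)\|$ after using $\|\bar F_h'(t-s)\| \le c(t-s)^{-2}$. On $[t/2,t]$ the same IBP, but with $\mathcal{G}_h(\tau) := \int_0^\tau \bar F_h(r)\,dr$ as the antiderivative, gives $\mathcal{G}_h(t/2)\rho(t/2) + \int_{t/2}^t \mathcal{G}_h(t-s)\rho_t(s)\,ds$; since $\|\mathcal{G}_h\|$ is uniformly bounded and $\int_{t/2}^t s^{-2}\,ds = 1/t$, these pieces are bounded respectively by $ct^{-1}\sup_s s\|\rho(s)\|$ and $ct^{-1}\sup_s s^2\|\rho_t(s)\|$. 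Together these estimates yield $\|\xi(t)\|\le c\,G(t)$.

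For $\eta$, subtracting \eqref{pr-3} from \eqref{pr-2} gives $T_h\cop \eta + \eta = T_h(f(u_h)-f(u))$ with $\eta(0)=0$, so by the representation formula \eqref{form-1} (with zero initial data),
\begin{equation*}
\eta(t) = \int_0^t \bar E_h(t-s)\bigl(f(u_h(s))-f(u(s))\bigr)\,ds.
\end{equation*}
Applying the kernel bound $\|\bar E_h(t)\|\le c t^{\alpha-1}$ from \eqref{E-1} together with the Lipschitz hypothesis \eqref{Lip} gives $\|\eta(t)\|\le cL\int_0^t (t-s)^{\alpha-1}\|e(s)\|\,ds$. Adding the two estimates and invoking Lemma~\ref{Gronwall} with exponent $1-\alpha$ (so that $(t-s)^{\alpha-1}=(t-s)^{-(1-\alpha)}$) produces \eqref{eq:error-1}.

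The main obstacle is the $\xi$-estimate: the resolvent $F_h(z)$ is only uniformly bounded (not decaying) on $\Gamma_{\theta,\delta}$, so $\|\bar F_h(t-s)\|\le c(t-s)^{-1}$ is non-integrable and a single integration by parts is not enough to absorb the endpoint term at $s=t$. The trick is to use two different antiderivatives of $\bar F_h$ on the two subintervals -- $\tilde\rho$ (primitive of $\rho$) together with $\bar F_h'$ on $[0,t/2]$, and the bounded primitive $\mathcal{G}_h$ together with $\rho_t$ on $[t/2,t]$ -- which is precisely what generates the three terms $\|\tilde\rho\|$, $s\|\rho\|$, $s^2\|\rho_t\|$ appearing in $G(t)$.
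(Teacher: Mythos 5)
Your proposal follows essentially the same route as the paper: the splitting $e=\eta+\xi$ through the intermediate solution $v_h$ of \eqref{pr-3}, the Duhamel--Lipschitz bound $\|\eta(t)\|\le cL\int_0^t(t-s)^{\alpha-1}\|e(s)\|\,ds$, and the closure via Lemma \ref{Gronwall}. The only difference is that the paper obtains $\|\xi(t)\|\le cG(t)$ by citing Lemma 3.5 of \cite{Karaa-2018}, whereas you reprove that bound directly (Laplace representation of $\xi$, splitting at $s=t/2$, and integration by parts with the two antiderivatives $\tilde\rho$ and the bounded primitive of the kernel), and your reconstruction of that step is correct.
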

\begin{proof} First we derive a bound for the difference between $u$ and the intermediate solution $v_h$. Since $T_h\xi(0)=0$, an application of Lemma 3.5 in \cite{Karaa-2018} to  \eqref{pr-1} and \eqref{pr-3} yields $\|\xi(t)\|\leq c G(t)$, where $G$ is given by \eqref{xi-1}.
In view of the splitting $e =\eta+\xi$, it suffices to estimate $\|\eta\|$. Note that $\eta$ satisfies
$$
T_h\cop \eta(t) +\eta(t) = T_h(f(u_h)-f(u)),\quad  t>0, \quad \eta(0)=0.
$$
Hence, by  Duhamel's principle, 
$$
\eta(t)=\int_0^t\bar{E}_h(t-s)(f(u_h(s))-f(u(s)))\,ds, \quad t>0.
$$
Using the property of $\bar{E}_h$ in \eqref{E-1} and  condition \eqref{Lip}, we see that 
$$
\|\eta(t)\|\leq c L\int_0^t (t-s)^{\alpha-1}\|u_h(s)-u(s)\|\,ds, \quad t>0, 
$$
and thus
$$
\|e(t)\|\leq \|\xi(t)\|+c \int_0^t(t-s)^{\alpha-1}\|e(s)\|\,ds, \quad t>0. 
$$
An application of Lemma \ref{Gronwall} yields \eqref{eq:error-1}, which completes the proof.
\end{proof}

Clearly  the  error estimate  in Lemma \ref{lem:main-1} is meaningful provided that 
$G\in L^1(0,T)$. Recalling that $\rho=(T_h-T)\cL u$, we have by \eqref{ass-1}, $\|\partial_t \rho(t)\|\leq ch^\gamma \|\cL \partial_t u(t)\|$. Hence,  to achieve a $O(h^\gamma)$ order of convergence, we need to assume that $\partial_t u(t)\in \mathcal{D}(\cL)$ for $t\in (0,T_0]$. It turns out that, without additional conditions on initial data and  nonlinearity, this property, which holds in the linear case, does not  generalize to the semilinear problem. This remark equally applies to the semilinear parabalic problem, see the discussion in \cite[pp. 228]{thomee1997}.



\section{Error estimates without regularity assumptions}\label{sec:RA}

We shall present below an alternative derivation of the error bound  without a priori 
regularity assumptions on the exact solution. To do so, we first introduce the operator
$$S_h(z) =(z^{-\alpha}I+ T_h)^{-1}T_h-(z^{-\alpha}I+ T)^{-1}T.$$
Then $S_h$ satisfies the following property.
\begin{lemma}\label{lem-02} There holds
\begin{equation}\label{ass}
\|S_h(z)v\|\leq ch^{\gamma}|z|^{\alpha(1-\nu)}\|\cL^\nu v\|\quad \forall z\in\Sigma_\theta,\quad \nu\in [0,1].
\end{equation}
\end{lemma}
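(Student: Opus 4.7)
The natural starting point is an algebraic identity that isolates the discrepancy $T_h-T$. Applying \eqref{expand} to both $T_h$ and (the continuous analogue of \eqref{expand} applied to) $T$, and subtracting, I would write
\begin{equation*}
S_h(z) \;=\; -z^{-\alpha}\bigl[(z^{-\alpha}I+T_h)^{-1}-(z^{-\alpha}I+T)^{-1}\bigr]
\;=\; z^{-\alpha}(z^{-\alpha}I+T_h)^{-1}(T_h-T)(z^{-\alpha}I+T)^{-1},
\end{equation*}
where the second equality is the standard resolvent-type identity. This factorization reduces everything to bounding three operators separately, and immediately uses the approximation hypothesis \eqref{ass-1} on the middle factor.

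Two of the three factors are essentially for free: from \eqref{res1} we get $\|z^{-\alpha}(z^{-\alpha}I+T_h)^{-1}\|\le M$ for $z\in\Sigma_\theta$, and \eqref{ass-1} gives $\|(T_h-T)w\|\le ch^\gamma\|w\|$ for any $w\in L^2(\Omega)$. The whole estimate therefore comes down to proving the resolvent bound
\begin{equation*}
\|(z^{-\alpha}I+T)^{-1}v\|\le c\,|z|^{\alpha(1-\nu)}\|\cL^\nu v\|,\qquad \nu\in[0,1],\;z\in\Sigma_\theta,
\end{equation*}
which is the real content of the lemma.

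For this key step I would exploit the spectral decomposition of $\cL$. Since $T\varphi_j=\lambda_j^{-1}\varphi_j$, Parseval gives
\begin{equation*}
\|(z^{-\alpha}I+T)^{-1}v\|^2 \;=\; \sum_{j=1}^\infty \frac{|(v,\varphi_j)|^2}{|z^{-\alpha}+\lambda_j^{-1}|^2}.
\end{equation*}
The main obstacle is the pointwise bound on $|z^{-\alpha}+\lambda_j^{-1}|$: since $z\in\Sigma_\theta$ with $\theta\in(\pi/2,\pi)$ and $\alpha\in(0,1)$, the argument of $z^{-\alpha}$ stays bounded away from $\pm\pi$, so a standard sectorial argument yields $|z^{-\alpha}+x|\ge c_1(|z|^{-\alpha}+x)$ uniformly in $x\ge 0$. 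With this in hand, setting $a=|z|^{-\alpha}$ and $x=\lambda_j^{-1}$, a one-line elementary calculus exercise on $x\mapsto x^\nu/(a+x)$ gives $\sup_{x>0}x^\nu/(a+x)\le c_2\,a^{\nu-1}=c_2|z|^{\alpha(1-\nu)}$. Inserting this into the Parseval sum and recognising $\sum_j\lambda_j^{2\nu}|(v,\varphi_j)|^2=\|\cL^\nu v\|^2$ yields the desired resolvent bound.

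Combining the three factor estimates, the $|z|^{-\alpha}$ and $|z|^{\alpha}$ from the first factor cancel, and the $h^\gamma$ from the middle factor and $|z|^{\alpha(1-\nu)}\|\cL^\nu v\|$ from the third factor produce the claim. As a sanity check, the endpoints $\nu=0$ and $\nu=1$ recover exactly the estimates derivable directly from \eqref{res1} and \eqref{res2} (using $v=T\cL v$), so in fact one could equally well prove the lemma by establishing these two endpoints and invoking interpolation on the Hilbert scale $\{\mathcal{D}(\cL^\nu)\}$; the spectral route above has the advantage of being self-contained.
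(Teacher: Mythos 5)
Your proof is correct, and it starts exactly where the paper's does: the same resolvent-type factorization $S_h(z)=z^{-\alpha}(z^{-\alpha}I+T_h)^{-1}(T_h-T)(z^{-\alpha}I+T)^{-1}$ (the paper records it with the two resolvents in the opposite order, which is equally valid by the two forms of the resolvent identity), with \eqref{res1} controlling the outer factor and \eqref{ass-1} the middle one. Where you genuinely diverge is in how the intermediate regularity is handled. The paper proves only the two endpoint cases --- $\nu=0$ by applying \eqref{res1} to the $T_h$-resolvent, and $\nu=1$ by writing $v=T\cL v$ and invoking \eqref{res2} --- and then obtains \eqref{ass} for $\nu\in(0,1)$ by interpolation on the Hilbert scale $\{\mathcal{D}(\cL^\nu)\}$. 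You instead establish the single uniform resolvent bound $\|(z^{-\alpha}I+T)^{-1}v\|\le c\,|z|^{\alpha(1-\nu)}\|\cL^\nu v\|$ for all $\nu\in[0,1]$ at once, by expanding in the eigenbasis of $\cL$, using the sectorial lower bound $|z^{-\alpha}+x|\ge c\,(|z|^{-\alpha}+x)$ (valid since $|\arg(z^{-\alpha})|\le\alpha\theta<\pi$) and the elementary inequality $a^{1-\nu}x^{\nu}\le a+x$. Both routes are complete and yield the same constants up to unimportant factors; yours is self-contained in that it avoids any appeal to interpolation-space theory and makes the $\nu$-dependence explicit, while the paper's is shorter and is precisely the ``endpoints plus interpolation'' alternative you yourself acknowledge in your closing sentence. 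Your endpoint sanity check also matches the paper's endpoint computations exactly, so nothing is missing.
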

\begin{proof}
Using the identity  \eqref{expand}, we verify that
\begin{eqnarray*}
S_h(z) & = & z^{-\alpha}(z^{-\alpha} I+T)^{-1}\left[(z^{-\alpha} I+T_h)-(z^{-\alpha} I+T)\right] 
(z^{-\alpha} I+T_h)^{-1}\\
& = & z^{-\alpha}(z^{-\alpha} I+T)^{-1}(T_h-T)(z^{-\alpha} I+T_h)^{-1}.
\end{eqnarray*}
Then, by  \eqref{res1} and \eqref{ass-1}, 
$$ 
\|S_h(z)v\|\leq c \|(T_h-T)(z^{-\alpha} I+T_h)^{-1}v\|\leq ch^{\gamma} |z|^{\alpha}\|v\|.
$$
This shows \eqref{ass} for $\nu=0$. 
For   $\nu=1$, i.e., $v\in D(\cL)$, we have $T\cL v=v$. Then, by \eqref{ass-1} and \eqref{res2}, we get 
$$ \|S_h(z)v\|  \leq c  \|(T_h-T)(z^{-\alpha} I+T)^{-1}T\cL v\|\leq ch^{\gamma} \|\cL v\|. $$
The desired estimate \eqref{ass-1}  follows now by interpolation.
\end{proof}

We further introduce the following operators:  $F_h(t)=E_h(t)-E(t)$ and $\bar F_h(t)=\bar E_h(t)-\bar E(t)$. Then, by Lemma \ref{lem-02}, 
\begin{equation} \label{0-p0}
\|\bar F_h(t)v\|\leq ch^\gamma\int_{\Gamma_{\theta,1/t}}e^{Re(z)t}\,|dz|\;\|v\|\leq ct^{-1}h^\gamma\|v\|.
\end{equation}
Similarly, based on  \eqref{ass},  the following estimate 
\begin{equation} \label{0-p}
\|F_h(t)v\|\leq ct^{-\alpha(1-\nu)} h^\gamma \|\cL^\nu v\|, \quad \nu=0,1,
\end{equation}
holds for  $F_h(t)$. Now we are ready to prove a nonsmooth data error estimate. Here and the throughout the paper, 
$\eh =|\ln h|$ if $\nu=0$ and $1$ otherwise.
\begin{theorem}\label{thm:semi} 
Let $u_0\in \mathcal{D}(\cL^\nu)$, $\nu\in [0,1]$.
Let $u$ and $u_h$ be the solutions defined by \eqref{form-2} and  \eqref{form-1}, respectively, with $u_{0h}=P_hu_0$.
Then there is a constant $c=c(r,L,T_0)$, where $r\geq \|\cL^\nu u_0\|+\|f(0)\|$, such that 
\begin{equation} \label{01-bb-s}
\|u_h(t)-u(t)\|\leq ch^\gamma \eh t^{-\alpha(1-\nu)},\quad t\in (0,T_0].
 \end{equation}
\end{theorem}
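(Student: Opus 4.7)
The plan is to decompose the error using the solution kernels $F_h=E_h-E$ and $\bar F_h=\bar E_h-\bar E$, apply the resolvent bounds from Lemma \ref{lem-02}, and close the loop via the generalized Gronwall inequality of Lemma \ref{Gronwall}. First I exploit assumption (ii) $T_hP_h=T_h$, which implies $K_h(z)P_h=K_h(z)$ and hence $E_h(t)P_h=E_h(t)$ and $\bar E_h(t)P_h=\bar E_h(t)$. With $u_{0h}=P_hu_0$, the representation \eqref{form-1} becomes $u_h(t)=E_h(t)u_0+\int_0^t\bar E_h(t-s)f(u_h(s))\,ds$, and subtracting \eqref{form-2} yields the splitting
$$
u_h(t)-u(t)=F_h(t)u_0+\int_0^t\bar F_h(t-s)f(u(s))\,ds+\int_0^t\bar E_h(t-s)\bigl(f(u_h(s))-f(u(s))\bigr)\,ds.
$$
The first term is bounded directly by \eqref{0-p} as $ch^\gamma t^{-\alpha(1-\nu)}\|\cL^\nu u_0\|$, while the third is of Volterra type in the unknown: by \eqref{E-1} and \eqref{Lip} it is dominated by $cL\int_0^t(t-s)^{\alpha-1}\|u_h(s)-u(s)\|\,ds$.

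The middle term is the heart of the proof. By the Gronwall-type bound on $\|u(t)\|$ noted after Lemma \ref{Gronwall}, together with \eqref{stab}, $\|f(u(s))\|\le cr$ uniformly on $[0,T_0]$. Combining the approximation bound $\|\bar F_h(\sigma)\|\le ch^\gamma\sigma^{-1}$ from \eqref{0-p0} with the smoothing bound $\|\bar F_h(\sigma)\|\le c\sigma^{\alpha-1}$ from \eqref{E-1}, one has $\|\bar F_h(\sigma)\|\le c\min(h^\gamma\sigma^{-1},\sigma^{\alpha-1})$; splitting the integral at the crossover $\sigma_\star=h^{\gamma/\alpha}$ produces $\int_0^t\|\bar F_h(\sigma)\|\,d\sigma\le ch^\gamma|\ln h|$, which is the origin of the factor $\ell_h(0)=|\ln h|$ in the nonsmooth case. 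For $\nu>0$ I would remove this logarithm by exploiting the H\"older regularity $\|f(u(s))-f(u(t))\|\le c(t-s)^{\alpha\nu}$, which follows from \eqref{Lip} combined with \eqref{regularity-1c}; writing
$$
\int_0^t\bar F_h(t-s)f(u(s))\,ds=\int_0^t\bar F_h(t-s)\bigl(f(u(s))-f(u(t))\bigr)\,ds+f(u(t))\int_0^t\bar F_h(\sigma)\,d\sigma,
$$
the first integrand has integrable singularity $(t-s)^{\alpha\nu-1}$ producing $O(h^\gamma t^{\alpha\nu})$ without any log, while the second piece is handled via the spectral identity $\int_0^t\bar F_h(\sigma)\,d\sigma=(T_h-T)(I-E(t))-T_hF_h(t)$ together with \eqref{ass-1} and \eqref{0-p}.

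Assembling these estimates gives
$$
\|u_h(t)-u(t)\|\le ch^\gamma\ell_h(\nu)t^{-\alpha(1-\nu)}+cL\int_0^t(t-s)^{\alpha-1}\|u_h(s)-u(s)\|\,ds,
$$
and Lemma \ref{Gronwall} converts this Volterra inequality into the claimed bound \eqref{01-bb-s}. The chief technical obstacle is the middle convolution $\int_0^t\bar F_h(t-s)f(u(s))\,ds$: balancing the nonintegrable bound $h^\gamma\sigma^{-1}$ against the integrable smoothing $\sigma^{\alpha-1}$ forces either the logarithm in the $\nu=0$ case or the finer H\"older refinement for $\nu>0$, and any cruder argument loses control of the time singularity $t^{-\alpha(1-\nu)}$ in \eqref{01-bb-s}.
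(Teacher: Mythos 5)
Your argument follows the paper's proof almost line for line: the same decomposition of $e(t)$ into $F_h(t)u_0$, the convolution $\int_0^t\bar F_h(t-s)f(u(s))\,ds$, and the Volterra term; the same $I+II$ splitting of the middle convolution using the H\"older continuity $u\in C^{\alpha\nu}([0,T_0];L^2(\Omega))$ for $\nu>0$; and the same closing via Lemma \ref{Gronwall}. Your explicit treatment of the case $\nu=0$ — interpolating $\|\bar F_h(\sigma)\|\le c\min(h^\gamma\sigma^{-1},\sigma^{\alpha-1})$ and splitting the integral at $\sigma_\star=h^{\gamma/\alpha}$ to produce the factor $|\ln h|$ — is correct and is actually more self-contained than the paper, which at that point simply defers to the proof of Theorem 4.4 in \cite{Karaa-2018b}.

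The one step that does not close as written is your bound for $II$. The identity $\int_0^t\bar F_h(\sigma)\,d\sigma=(T_h-T)(I-E(t))-T_hF_h(t)$ is correct (both sides have Laplace transform $z^{-1-\alpha}S_h(z)$), and $(T_h-T)(I-E(t))=O(h^\gamma)$ follows from \eqref{ass-1} and \eqref{E-1}. But the tools you cite do not give $\|T_hF_h(t)\|\le ch^\gamma$: estimate \eqref{0-p} with $\nu=0$ only yields $\|T_hF_h(t)\|\le c\,h^\gamma t^{-\alpha}$, and this extra $t^{-\alpha}$ singularity is \emph{not} absorbed by the target rate $t^{-\alpha(1-\nu)}$ when $\nu>0$; the $\nu=1$ version of \eqref{0-p} is unusable here because $f(u(t))\notin\mathcal{D}(\cL)$ in general. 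What is needed is the uniform bound $\bigl\|\int_0^t\bar F_h(\sigma)\,d\sigma\bigr\|\le ch^\gamma$. The paper obtains it by introducing the primitive $\widetilde E(t)=\frac{1}{2\pi i}\int_{\Gamma_{\theta,\delta}}e^{zt}z^{-1-\alpha}S_h(z)\,dz$, for which $\|S_h(z)\|\le ch^\gamma|z|^{\alpha}$ gives $\|\widetilde E(t)\|\le ch^\gamma$ uniformly in $t$, whence $\|II\|\le\|\widetilde E(t)-\widetilde E(0)\|\,\|f(u(t))\|\le ch^\gamma$. Alternatively, within your identity you can write $T_hF_h(t)=(T_h-T)F_h(t)+TF_h(t)$, bound the first piece by $ch^\gamma\|F_h(t)\|\le ch^\gamma$, and estimate $TF_h(t)=\frac{1}{2\pi i}\int_{\Gamma_{\theta,1/t}}e^{zt}z^{-1}TS_h(z)\,dz$ directly using $TS_h(z)=z^{-\alpha}K(z)(T_h-T)(z^{-\alpha}I+T_h)^{-1}$ and hence $\|TS_h(z)\|\le ch^\gamma$. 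With either repair, the assembly and the Gronwall step go through and yield \eqref{01-bb-s}.
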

\begin{proof} Recall that $e=u_h-u$. From \eqref{form-1} and \eqref{form-2}, we get after rearrangements 
\begin{equation}\label{intt}
e(t)= F_h(t)u_0+\int_0^t \bar{E}_h(t-s)[f(u_h(s))-f(u(s))]\,ds+\int_0^t \bar{F}_h(t-s) f(u(s))\,ds.
\end{equation}
The last term in \eqref{intt} can be written as $I+II$ where 
$$I = \int_0^t \bar{F}_h(t-s) (f(u(s))-f(u(t)))\,ds \quad \mbox{and} \quad II=\left(\int_0^t \bar{F}_h(t-s) \,ds\right)f(u(t)).$$
For $\nu \in (0,1]$, we use   \eqref{0-p0}, \eqref{Lip} and the property $u\in C^{\alpha \nu}([0,T_0],L^2(\Omega))$ to get
$$ 
\|I\|\leq ch^\gamma\int_0^t(t-s)^{-1}(t-s)^{\alpha \nu}\,ds \leq ch^\gamma t^{\alpha \nu}.$$
To estimate $II$, we introduce the operator  
$\widetilde E(t)=\frac{1}{2\pi i}\int_{\Gamma_{\theta,\delta}}e^{zt} z^{-1-\alpha} S_h(z)\,dz.$
Then  $\widetilde E'(t)=\bar F(t)$  and  $\|\widetilde E(t)\| \leq ch^\gamma$ for all $t\geq 0$ since $\|S_h(z)\|\leq ch^\gamma|z|^{\alpha}$. Hence,  $\|II\|\leq \|f(u(t))\| \,\|\widetilde E(t)-\widetilde E(0)\|\leq c h^\gamma$ as $\|f(u)\|$ is bounded, see \eqref{stab}.
 Now, using the properties of  
$\bar E_h$ and $F_h$ in \eqref{E-1} and \eqref{0-p}, respectively, we obtain 
\begin{equation}
\|e(t)\|\leq ct^{-\alpha(1-\nu)} h^\gamma \|\cL^\nu u_0\|+c\int_0^t(t-s)^{\alpha-1}\|e(s)\|\,ds+ch^\gamma + ch^\gamma t^{\alpha \nu}.
\end{equation}
The desired estimate follows now by applying Lemma \ref{Gronwall}. 
To establish the estimate  for $\nu=0$, we 
follow the same arguments presented in the proof of Theorem 4.4 in \cite{Karaa-2018b}.
\end{proof}

As an immediate application of Theorem \ref{thm:semi}, consider the standard conforming Galerkin FEM with $V_h\subset H_0^1(\Omega)$ consists of piecewise linear functions on a shape-regular triangulation with a mesh parameter $h$. 
Let $T_h:L^2(\Omega)\rightarrow V_h$ be the solution operator of the discrete  problem:
\begin{equation}\label{T_h-ngg-cc}
a(T_hf, v)=(f,v)\quad \forall v \in V_h,
\end{equation}
where $a(\cdot,\cdot)$ is the bilinear form associated with the elliptic operator $\cL$.
Then, $T_h$ is selfadjoint, positive semidefinite  on $L^2(\Omega)$ 
and positive definite  on $V_h$, see \cite{BSTW-1977}, and satisfies \eqref{ass-1} with $\gamma=2$.
Thus, by Theorem \ref{thm:semi},
\begin{equation} \label{01-new-cc}
 \|u_h(t)-u(t)\| \leq c h^2 \eh t^{-\alpha(1-\nu)}, \quad t >0,
\end{equation} 
for $\nu\in [0,1]$. This  improves  the following estimate 
\begin{equation} \label{01-old-cc}
\|u_h(t)-u(t)\|\leq ch^2\left(t^{-\alpha(1-\nu)}+\max(0,\ln (t^{\alpha(1-\nu)}/h^2))\right),\quad t>0,
 \end{equation}
established in \cite{Karaa-2018b}. We notice that the logarithmic factor is also present in the parabolic case when $\nu=0$, see \cite[Theorem 1.1]{JLTW-1987}.



As a second example, we show that the present semidiscrete error analysis extends 
 to the following  multi-term time-fractional diffusion problem:
\begin{equation}\label{a-m}
P(\partial_t) u  + \cL u=f(u)\; \mbox{ in } \Omega\times (0,T_0],\quad u(0)=u_0\; \mbox{ in } \Omega, \quad u=0 \; \mbox{ on } \partial \Omega\times (0,T_0],
\end{equation}
where  the multi-term differential operator $P(\partial_t)$ is defined by
$
P(\partial_t) = \partial_t^{\alpha}+\sum_{i=1}^m b_i\partial_t^{\alpha_i}
$
with $0<\alpha_m\leq\cdots\leq \alpha_1\leq \alpha<1$ being the orders of the fractional Caputo derivatives, and  $b_i>0$, $i=1,\ldots,m$.
This model  was derived to improve the modeling accuracy of the single-term model 
\eqref{a}  for describing anomalous diffusion. An inspection of the proof of the
Theorem \ref{thm:semi} reveals that its main arguments are based on the bounds derived for the operators $F_h$, $\bar{E}_h$ and  $\bar{F}_h$. Following \cite{JLZ2015}, one can verify that these operators satisfy the same bounds as in the single-term case. This readily implies that the estimate  \eqref{01-bb-s} remains valid for the multi-term diffusion problem  \eqref{a-m}.


\begin{remark}\label{parabolic} In the parabolic case, a singularity in time appears which has the same form as in \eqref{0-p0}. Hence, the estimate \eqref{01-bb-s} remains valid when $\alpha=1$. 
 
\end{remark}  

\begin{remark}\label{Ru}
For smooth initial data $u_0\in \mathcal{D}(\cL)$, the estimate \eqref{01-bb-s}  still holds for the choice $u_{0h}=R_hu_0$. Indeed, we have
$$
E_h(t)R_h u_0 - E(t)u_0 = E_h(t)(R_h u_0 - u_0) + (E_h(t)u_0- E(t)u_0).
$$
By the stability of the operator  $E_h(t)$,
$$
 \|E_h(t)(R_h u_0 - u_0)\|\leq   c\|R_h u_0 - u_0\|\leq ch^\gamma\|\cL u_0\|.
$$
Then we reach our conclusion by following the arguments in the proof of Theorem \ref{thm:semi}. 
\end{remark}


\section{Fully discrete schemes}\label{sec:FD}

This section is devoted to the analysis of a fully discrete scheme for  problem \eqref{pr-2} based on a convolution quadrature (CQ) generated by the backward Euler method, using the framework developed in 
\cite{LST-1996, Lubich-2006}.
Divide the time interval $[0,T_0]$ into $N$ equal subintervals with a time step size $\tau=T_0/N$, and let $t_j=j\tau$. 
The convolution quadrature \cite{Lubich-1986} refers to an approximation of any function of 
the form $k\ast\varphi$ as
$$
(k\ast\varphi) (t_n):=\int_0^{t_n}k(t_n-s)\varphi(s)\,ds\approx \sum_{j=0}^n \beta_{n-j}(\tau) \varphi(t_j),
$$
where the weights $\beta_j=\beta_j(\tau)$ are computed from the Laplace transform $K(z)$  of 
$k$ rather than the kernel $k(t)$. 
With $\partial_t$ being time differentiation,  define $K(\partial_t)$ as the operator 
of (distributional) convolution with the kernel $k$: $K(\partial_t)\varphi=k\ast \varphi$ for a 
function $\varphi(t)$ with suitable smoothness.
Then a convolution quadrature will approximate $K(\partial_t)\varphi$ by a discrete convolution 
$K( \partial_\tau)\varphi$ at $t=t_n$  as
$
K( \partial_\tau)\varphi(t_n) = \sum_{j=0}^n \beta_{n-j}(\tau) \varphi(t_j),
$
where the quadrature weights $\{\beta_j(\tau)\}_{j=0}^{\infty}$ are determined by the generating 
power series
$
\sum_{j=0}^\infty  \beta_j(\tau) \xi^j=K(\delta(\xi)/\tau)
$
with $\delta(\xi)$ being a rational function, chosen as the quotient of the generating polynomials of 
a stable and consistent linear multistep method. For the backward Euler  method,  $\delta(\xi)=1-\xi$.

An important property of the convolution quadrature is that it maintains some relations of the continuous convolution. 
For instance, the associativity of convolution is valid for the convolution 
quadrature  \cite{Lubich-2006} such as
\begin{equation}\label{s1}
K_2( \partial_\tau) K_1( \partial_\tau)=K_2 K_1( \partial_\tau)\quad 
\text{ and }\quad 
K_2( \partial_\tau)(k_1\ast\varphi)=(K_2( \partial_\tau)k_1)\ast\varphi.
\end{equation}

In the following lemma, we state an interesting result on the error of the convolution quadrature \cite[Theorem 5.2]{Lubich-1988}.
\begin{lemma}\label{lem:Lubich}
Let $G(z)$ be  analytic in the sector $\Sigma_\theta$ and such that
\begin{equation*}\label{s3}
\|G(z)\|\leq c|z|^{-\mu}\quad \forall z\in \Sigma_\theta,
\end{equation*}
for some real $\mu$ and $c$. Then, for $\varphi(t)=ct^{\sigma-1}$, the convolution quadrature based on the backward Euler method satisfies
\begin{equation}\label{s4}
\|G(\partial_t)\varphi(t) - G( \partial_\tau)\varphi(t)\|  \leq \left\{
\begin{array}{ll}
C t^{\mu+\sigma-2} \tau, & \sigma\geq 1\\
C t^{\mu-1} \tau^\sigma, & 0< \sigma\leq 1.
\end{array} \right.
\end{equation}
\end{lemma}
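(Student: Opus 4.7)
The plan is to prove the bound via contour-integral representations of both the continuous convolution and its backward-Euler quadrature, and to compare them through the consistency of the symbol $\delta(e^{-z\tau})/\tau$ with $z$. Since $\varphi(t)=ct^{\sigma-1}$ has Laplace transform $\hat\varphi(z)=c\Gamma(\sigma)z^{-\sigma}$, one has
\[
G(\partial_t)\varphi(t) = \frac{1}{2\pi i}\int_{\Gamma_{\theta,1/t}} e^{zt}\,G(z)\,\hat\varphi(z)\,dz.
\]
For the discrete convolution, I would invert the generating series $G(\delta(\xi)/\tau)\sum_{j\geq 0}\varphi(t_j)\xi^j$ by Cauchy's integral formula on a circle $|\xi|=\rho<1$ and then change variables $\xi=e^{-z\tau}$ to obtain
\[
G(\partial_\tau)\varphi(t_n) = \frac{\tau}{2\pi i}\int_{\Gamma_\tau} e^{zt_n}\,G(\delta(e^{-z\tau})/\tau)\,\Phi_\tau(z)\,dz,
\]
where $\Phi_\tau(z)=\tau\sum_{j\geq 0}\varphi(t_j)e^{-zt_j}$ is a ``discrete'' Laplace transform of $\varphi$ and $\Gamma_\tau$ is a horizontal segment in a strip of width $2\pi/\tau$.

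Two analytic ingredients drive the estimate. First, Cauchy's formula applied to the assumed bound $\|G(z)\|\leq c|z|^{-\mu}$ yields $\|G'(z)\|\leq c|z|^{-\mu-1}$ on a slightly narrower subsector. Combined with the backward-Euler consistency $\delta(e^{-z\tau})/\tau=z+O(z^2\tau)$, a mean-value argument gives
\[
\bigl\|G(\delta(e^{-z\tau})/\tau)-G(z)\bigr\| \leq c\,\tau\,|z|^{1-\mu},\qquad |z|\tau \leq c_0.
\]
Second, direct estimation of $\Phi_\tau - \hat\varphi$ using the explicit form of $\varphi$ and elementary series expansions yields $|\Phi_\tau(z)-\hat\varphi(z)|\leq c\tau\,|z|^{1-\sigma}$ when $\sigma\geq 1$ (quadrature of a smooth function) and $|\Phi_\tau(z)-\hat\varphi(z)|\leq c\tau^\sigma|z|^{-1}$ when $0<\sigma\leq 1$ (quadrature of a singular power, which is the essential restriction).

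Subtracting the two representations on the common contour $\Gamma_{\theta,1/t}$ with $t=t_n$, and decomposing the integrand as $G(z)(\Phi_\tau(z)-\hat\varphi(z)) + [G(\delta(e^{-z\tau})/\tau)-G(z)]\Phi_\tau(z)$, the remaining task reduces to standard sector integrals of the form $\int_0^\infty e^{-c\rho t}\rho^{a}\,d\rho = c\,t^{-a-1}$. For $\sigma\geq 1$ the symbol error dominates, with integrand weight $|z|^{1-\mu-\sigma}$, producing $C\tau\,t^{\mu+\sigma-2}$; for $0<\sigma\leq 1$ the data-quadrature error dominates, with weight $|z|^{-\mu-1}\tau^\sigma$, producing $C\tau^\sigma t^{\mu-1}$.

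The main obstacle is justifying the use of the same sectorial contour after the substitution $\xi=e^{-z\tau}$, because once $|z|\tau\gtrsim 1$ the image $\delta(e^{-z\tau})/\tau$ may leave the sector $\Sigma_\theta$ on which $G$ is analytic and decays as $|z|^{-\mu}$. The standard remedy is to truncate $\Gamma_{\theta,1/t}$ at $|z|=c_0/\tau$ and treat the tail separately, using only the crude stability bound $\|G(\delta(e^{-z\tau})/\tau)\|\leq c|\delta(e^{-z\tau})/\tau|^{-\mu}\leq c\tau^\mu$ combined with the exponential decay $|e^{zt}|\leq e^{-c\rho t}$; the sharp bookkeeping at this truncation, together with the separate behaviour of $\Phi_\tau$ for singular versus regular $\varphi$, is exactly what splits the conclusion of \eqref{s4} into its two regimes.
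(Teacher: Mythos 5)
You should first note that the paper does not prove this lemma at all: it is quoted verbatim as Theorem 5.2 of Lubich (1988), so there is no in-paper argument to compare against. Your strategy --- contour representations of both $G(\partial_t)\varphi$ and $G(\partial_\tau)\varphi$, the substitution $\xi=e^{-z\tau}$, the symbol consistency $\delta(e^{-z\tau})/\tau=z+O(z^2\tau)$ together with a Cauchy-estimate for $G'$, and the truncation of the contour at $|z|\sim c_0/\tau$ --- is exactly the standard route by which this result is established, and the $\sigma\geq 1$ branch of your bookkeeping is consistent: the weight $|z|^{1-\mu-\sigma}$ integrated over $\Gamma_{\theta,1/t}$ does give $t^{\mu+\sigma-2}$.

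There is, however, a genuine quantitative error in the singular branch $0<\sigma\leq 1$. You claim $|\Phi_\tau(z)-\hat\varphi(z)|\leq c\tau^\sigma|z|^{-1}$ and then assert that the resulting weight $|z|^{-\mu-1}\tau^\sigma$ produces $C\tau^\sigma t^{\mu-1}$; but $\int_{\Gamma_{\theta,1/t}}|e^{zt}|\,|z|^{-\mu-1}\,|dz|\leq c\,t^{\mu}$, so as written your estimate yields $C\tau^\sigma t^{\mu}$, which is \emph{not} the statement of the lemma (and the claimed bound on $\Phi_\tau-\hat\varphi$ is also dimensionally off by one power of time: both $\Phi_\tau$ and $\hat\varphi$ scale like $|z|^{-\sigma}$, so the defect must scale like $\tau^\sigma|z|^{0}$, not $\tau^\sigma|z|^{-1}$). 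The correct ingredient is the polylogarithm asymptotics $\tau\sum_{j\geq 1}(j\tau)^{\sigma-1}e^{-zj\tau}=\Gamma(\sigma)z^{-\sigma}+\zeta(1-\sigma)\tau^{\sigma}+O(\tau^{\sigma+1}|z|)$ for $|z|\tau$ bounded, giving $|\Phi_\tau(z)-\hat\varphi(z)|\leq c\tau^\sigma$; combined with $\|G(z)\|\leq c|z|^{-\mu}$ this gives the weight $\tau^\sigma|z|^{-\mu}$ and hence the stated $C\tau^\sigma t^{\mu-1}$. Two further points need care in a complete write-up: (i) for $\sigma<1$ the quadrature term involving $\varphi(t_0)=\varphi(0)$ is infinite, so you must state the convention under which $G(\partial_\tau)\varphi(t_n)$ is defined for singular $\varphi$ (this is handled explicitly in Lubich's formulation); and (ii) the constant-term defect $\zeta(1-\sigma)\tau^\sigma$ does not decay in $|z|$, so on the truncated contour $|z|\leq c_0/\tau$ you need $\mu$-dependent case analysis (or integration by parts in the representation) to control $\int\tau^\sigma|z|^{-\mu}|e^{zt}|\,|dz|$ when $-\mu\geq 0$; this is where the sharp splitting into the two regimes of \eqref{s4} actually gets settled.
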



Upon using the relation between the Riemann-Liouville derivative denoted by $\op$ and the Caputo derivative $\cop$, the semidiscrete scheme \eqref{pr-2} can be rewritten  as
\begin{equation}\label{Ph-1RL}
T_h\op (u_h -u_{0h}) +u_h = T_hf(u_h),\quad t>0,\quad u_h(0)=u_{0h}.
\end{equation}
Thus, the proposed backward Euler CQ scheme is to seek $U_h^n\in V_h$, $n\geq 0$, such that
\begin{equation}\label{BE-1a}
T_h\dop (U_h^n -U_h^0) +U_h^n = T_hf(U_h^n), \quad  n\geq 1,\quad U_h^0=u_{0h}.
\end{equation}


\subsection{The linear case}
We begin by investigating the time discretization of the  inhomogeneous linear problem
\begin{equation}\label{C1}
T \cop u +u = Tf(t),\quad t>0,\quad u(0)=u_0,
\end{equation}
with a semidiscrete solution $u_h(t)\in V_h$  satisfying
\begin{equation}\label{C2}
T_h \cop u_h +u_h = T_hf(t),\quad t>0,\quad u_h(0)=P_h u_{0}.
\end{equation}
The fully discrete solution $U_h^n\in V_h$  is defined by 
\begin{equation}\label{C3}
T_h\dop (U_h^n -U_h^0) +U_h^n = T_hf(t_n), \quad n\geq 1,\quad U_h^0=P_h u_{0}.
\end{equation}
Then we establish the following result.
\begin{theorem}\label{thm:BE}
Let $u_0\in \mathcal{D}(\cL^\nu)$, $\nu\in [0,1]$.
Let $u$ and $U_h^n$ be the solutions of  $\eqref{C1}$ and $\eqref{C3}$, respectively, with $f=0$. 
 Then, for $t_n>0$, 
\begin{equation}\label{s9}
\|U^n_h-{u}(t_n)\|\leq c (\tau t_n^{\alpha \nu-1}+ h^\gamma t_n^{-\alpha(1-\nu)})\|\cL^\nu u_0\|.
\end{equation}
\end{theorem}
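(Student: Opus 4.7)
The plan is to split the total error as $U_h^n - u(t_n) = (U_h^n - u_h(t_n)) + (u_h(t_n) - u(t_n))$ and handle the temporal and spatial pieces separately, using the contour-integral framework of Section~\ref{sec:AB} together with Lemma~\ref{lem:Lubich}.

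For the spatial piece, assumption (ii) gives $K_h(z)P_h = K_h(z)$ (since $K_h(z) = (z^{-\alpha} I + T_h)^{-1}T_h$ and $T_h P_h = T_h$), so that $E_h(t) u_{0h} = E_h(t) P_h u_0 = E_h(t) u_0$ and hence $u_h(t_n) - u(t_n) = F_h(t_n) u_0$. Applying \eqref{0-p} at the endpoints $\nu \in \{0,1\}$ and interpolating yields $\|F_h(t) u_0\| \le c\, t^{-\alpha(1-\nu)} h^\gamma \|\cL^\nu u_0\|$ for every $\nu \in [0,1]$; in contrast to Theorem~\ref{thm:semi}, no logarithmic factor is incurred since there is no nonlinear source to integrate against $\bar E_h$.

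For the temporal piece, I would derive a contour representation of $U_h^n$ parallel to $u_h(t_n) = (2\pi i)^{-1}\int_\Gamma e^{z t_n} z^{-1} K_h(z) u_0\, dz$. Taking the $z$-transform of \eqref{C3} produces $\hat U(\xi) = u_{0h} + \xi(1-\xi)^{-1} K_h(\delta(\xi)/\tau) u_{0h}$; inverting via Cauchy's formula on $|\xi| = \rho < 1$ and substituting $z = (1-\xi)/\tau$ gives, for $n \ge 1$, $U_h^n = (2\pi i)^{-1}\int_{\Gamma_\tau}(1 - z\tau)^{-n} z^{-1} K_h(z) u_0\, dz$, where $\Gamma_\tau$ is a deformation of $\Gamma_{\theta, 1/t_n}$ that stays away from the pole of $(1-z\tau)^{-n}$ at $z = 1/\tau$ (this is automatic on the rays since $\theta > \pi/2$, and the circular part is chosen with $|z|\tau$ bounded). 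Subtracting yields $u_h(t_n) - U_h^n = (2\pi i)^{-1}\int_{\Gamma_\tau}[e^{z t_n} - (1 - z\tau)^{-n}]\, z^{-1} K_h(z) u_0\, dz$; the elementary Taylor bound $|e^{z t_n} - (1 - z\tau)^{-n}| \le c\tau |z|^2 t_n\, e^{c_1\Re(z) t_n}$ together with $\|K_h(z) u_0\| \le c\|u_0\|$ from \eqref{res2} already recovers the generic $O(\tau t_n^{-1})\|u_0\|$ estimate, which is \eqref{s9} at $\nu = 0$.

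To obtain the sharper rate $c\tau t_n^{\alpha\nu - 1}\|\cL^\nu u_0\|$ for $\nu \in (0,1]$, I would invoke Lemma~\ref{lem:Lubich} on the convolution representation $u_h(t) - u_{0h} = -(G_h(\partial_t)\mathbf{1})(t)\, u_{0h}$ with $G_h(z) = I - K_h(z) = z^{-\alpha}(z^{-\alpha} I + T_h)^{-1}$. The crucial new ingredient is the refined bound $\|G_h(z) u_0\| \le c\bigl(|z|^{-\alpha\nu} + h^\gamma |z|^{\alpha(1-\nu)}\bigr)\|\cL^\nu u_0\|$, obtained by combining the continuous identity $(K(z) - I) v = -z^{-\alpha}(I + z^{-\alpha}\cL)^{-1}\cL v$ at $\nu = 1$ (interpolated against the trivial $\nu = 0$ bound from \eqref{res2}) with the discrete correction $\|S_h(z) v\| \le c h^\gamma |z|^{\alpha(1-\nu)} \|\cL^\nu v\|$ from Lemma~\ref{lem-02}. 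The main obstacle is precisely this smoothing estimate, since $T_h$ and $\cL$ do not commute and the $\cL^\nu$-regularity of $u_0$ must be routed through the $T_h$-resolvent; once it is established, Lemma~\ref{lem:Lubich} applied with $\mu = \alpha\nu$, $\varphi \equiv u_{0h}$ (so $\sigma = 1$) delivers the factor $\tau t_n^{\alpha\nu - 1}$ directly, and adding it to the spatial bound produces \eqref{s9}.
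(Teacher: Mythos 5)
Your argument is correct and shares the paper's skeleton -- split the error as $(U_h^n-u_h(t_n))+(u_h(t_n)-u(t_n))$, bound the spatial part by $\|F_h(t_n)u_0\|\leq ch^\gamma t_n^{-\alpha(1-\nu)}\|\cL^\nu u_0\|$ via \eqref{0-p} and interpolation, and treat the CQ error with Lemma \ref{lem:Lubich} at $\sigma=1$ -- but you extract the $\nu$-dependence of the temporal error by a genuinely different mechanism. The paper first proves the smooth-data rate \eqref{s8-b} for the starting value $R_hu_0=T_h\cL u_0$, where $K_h(z)T_h\cL v$ reduces matters to $\bar{G}(z)=z^{-\alpha}(z^{-\alpha}I+T_h)^{-1}T_h$ with $\|\bar{G}(z)\|\leq c|z|^{-\alpha}$, then passes to $P_hu_0$ through the three-term splitting $G(\partial_\tau)(u_0-R_hu_0)+G(\partial_\tau)R_hu_0-G(\partial_t)u_0$ combined with stability and Remark \ref{Ru}, and finally interpolates the resulting error bounds between $\nu=0$ and $\nu=1$. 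You instead prove a $\nu$-dependent resolvent bound on $I-K_h(z)$ directly, writing it as $\bigl(I-K(z)\bigr)-S_h(z)$ and combining the interpolated continuous bound $c|z|^{-\alpha\nu}\|\cL^\nu u_0\|$ with Lemma \ref{lem-02}; this avoids the Ritz-projection detour and the final interpolation, covers all $\nu\in[0,1]$ in one pass, and makes the provenance of each term in \eqref{s9} transparent. What it costs is exactly the smoothing estimate you flag as the main obstacle, which your decomposition does establish. Two details to tighten when writing it up: since your bound on $(I-K_h(z))u_0$ is a sum of two terms with different powers of $|z|$, Lemma \ref{lem:Lubich} must be invoked separately on the two pieces, with $\mu=\alpha\nu$ for $I-K(z)$ and $\mu=-\alpha(1-\nu)$ for $S_h(z)$, the latter producing $ch^\gamma\tau t_n^{-\alpha(1-\nu)-1}\|\cL^\nu u_0\|$ which is absorbed into $ch^\gamma t_n^{-\alpha(1-\nu)}\|\cL^\nu u_0\|$ because $\tau\leq t_n$; and since $I-K_h(z)$ does not annihilate $I-P_h$, you should note that in the difference of quadrature and convolution applied to the constant function the identity parts cancel exactly while the $K_h$ parts satisfy $K_h(z)P_h=K_h(z)$, so $P_hu_0$ may be replaced by $u_0$ -- this is precisely where the paper invokes $T_hP_h=T_h$ and the $L^2$-stability of $P_h$.
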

\begin{proof} We first notice that  $\|u_h(t_n)-{u}(t_n)\|\leq c t_n^{-\alpha(1-\nu)}h^\gamma\|\cL^\nu u_0\|$, which follows from  Theorem \ref{thm:semi} in the case $f=0$.  In order  to estimate $\|U^n_h-{u}_h(t_n)\|$, apply $\partial_t^{-\alpha}$ and $\partial_\tau^{-\alpha}$ to \eqref{C2} and \eqref{C3},
respectively, use the associativity of convolution and the property $T_hP_h=T_h$,
to deduce that 
\begin{equation}\label{s7-pt}
U^n_h-{u}_h(t_n)= - \left( G(\partial_\tau)- G(\partial_t)\right)u_{0},
\end{equation}
where $G(z)=(z^{-\alpha} I+T_h)^{-1}T_h$.
We recall that, by \eqref{res1},  $\| G(z)\|\leq c\;  \forall z\in \Sigma_\theta.$ Then,  Lemma \ref{lem:Lubich}  (with $\mu=0$,  $\sigma=1$)   and the $L^2$-stability of $P_h$ yield
\begin{equation}\label{s8}
\|U^n_h-{u}_h(t_n)\|\leq c \tau t_n^{-1}\|u_0\|.
\end{equation}
For $u_0\in \mathcal{D}(\cL)$,  consider first the choice $u_h(0)=R_h u_0$. Recalling that $R_h=T_h \cL$, we  use  the identity $G(z)=I-z^{-\alpha}(z^{-\alpha} I+T_h)^{-1}$ to get
$$U^n_h-{u}_h(t_n)= -\left( \bar{G}(\partial_\tau)- \bar{G}(\partial_t)\right)\cL v,$$ 
where $\bar{G}(z)=z^{-\alpha}(z^{-\alpha} I+T_h)^{-1}T_h$.
Since $\|\bar{G}(z)\|\leq c |z|^{-\alpha}$ $\forall z\in \Sigma_\theta$, an application of Lemma \ref{lem:Lubich}  (with $\mu=\alpha$,  $\sigma=1$) yields 
\begin{equation}\label{s8-b}
\|U^n_h-{u}_h(t_n)\|\leq c \tau t_n^{\alpha-1}\|\cL v\|.
\end{equation}

For the choice $u_0=P_hv$, we split the new error  
$$ G(\partial_\tau)u_0- G(\partial_t)u_0= G(\partial_\tau)(u_0-R_h u_0) + G(\partial_\tau)R_hu_0-G(\partial_t)u_0.$$
By the stability of the  discrete scheme, the estimate \eqref{s8-b} and remark \ref{Ru}, we deduce that
$
\|G(\partial_\tau)u_0- G(\partial_t)u_0\|\leq c(h^\gamma+\tau t_n^{\alpha-1})\|\cL u_0\|.
$
This shows  \eqref{s9}  when $\nu=1$. Finally, for $\nu \in (0,1)$, the estimate \eqref{s9} follows by interpolation. 
\end{proof}

Now we  prove an error estimate when $f\neq 0$ but with $u_0=0$. 
\begin{theorem}\label{thm:BE-n}
Let $u$ and $U_h^n$ be the solutions of  $\eqref{C1}$ and $\eqref{C3}$, respectively, with $u_0=0$.
If $f$ satisfies $|f(t)-f(s)|\leq c|t-s|^\theta$ $\forall t,s\in \mathbb{R}$ with some $\theta \in (0,1)$ and  $\int_0^t (t-s)^{\alpha-1}\|f'(s)\|ds<\infty $ $\forall t\in(0,T_0]$, 
then, for $t_n>0$, 
\begin{equation}\label{s12-n}
\|U^n_h-u(t_n)\|\leq  c h^\gamma+c 
\left(\tau t_n^{\alpha-1}\|f(0)\|+\tau\int_0^{t_n} (t_n-s)^{\alpha-1}\|f'(s)\|ds\right).
\end{equation}
\end{theorem}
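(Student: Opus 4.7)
My plan is to split the error into a spatial semidiscrete part and a temporal discretization part,
$U_h^n - u(t_n) = (u_h(t_n) - u(t_n)) + (U_h^n - u_h(t_n))$,
and bound each separately. For the spatial part with $u_0 = 0$, I would use the Duhamel representation $u_h(t_n) - u(t_n) = \int_0^{t_n} \bar F_h(t_n - s) f(s)\,ds$ and follow essentially the argument of Theorem \ref{thm:semi}: write $f(s) = (f(s) - f(t_n)) + f(t_n)$, use $\|\bar F_h(t)\| \leq ct^{-1}h^\gamma$ from \eqref{0-p0} together with the H\"older hypothesis $\|f(s)-f(t_n)\| \leq c|t_n-s|^\theta$ to handle the first piece (giving $ch^\gamma t_n^\theta$), and use $\|\widetilde E(t_n) - \widetilde E(0)\| \leq ch^\gamma$ together with the boundedness of $\|f(t_n)\|$ (guaranteed by H\"older continuity and $\|f(0)\|$) to handle the second piece. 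This yields $\|u_h(t_n) - u(t_n)\| \leq ch^\gamma$.

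For the time-discretization error, I would use the operational-calculus representation $u_h(t_n) = K(\partial_t) f(t_n)$ and $U_h^n = K(\partial_\tau) f(t_n)$, where $K(z) = z^{-\alpha}(z^{-\alpha} I + T_h)^{-1} T_h$ satisfies $\|K(z)\| \leq c|z|^{-\alpha}$ on $\Sigma_\theta$ by \eqref{res2}. Decomposing $f(t) = f(0) + (1 \ast f')(t)$, the first term contributes $(K(\partial_t) - K(\partial_\tau)) f(0)$, which corresponds exactly to Lemma \ref{lem:Lubich} with $\mu = \alpha$ and $\sigma = 1$, giving $c\tau\, t_n^{\alpha - 1} \|f(0)\|$.

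For the second piece $(K(\partial_t) - K(\partial_\tau))(1 \ast f')(t_n)$, I would use continuous associativity of convolution on one side and property \eqref{s1} on the other. Writing $(1\ast f')(t_j) = \int_0^{t_j} f'(s)\,ds$ exactly and switching the order of summation and integration, one obtains the representation $K(\partial_\tau)(1\ast f')(t_n) = \int_0^{t_n} f'(s)\, W_\tau(t_n,s)\,ds$, where $W_\tau(t_n,\cdot)$ is piecewise constant with values $\sum_{l=0}^{n-k}\beta_l(\tau)$ on $(t_{k-1},t_k]$, while on the continuous side $K(\partial_t)(1\ast f')(t_n) = \int_0^{t_n} f'(s)\, H(t_n - s)\,ds$ with $H(t) = \int_0^t \bar E_h(u)\,du$. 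The key step is then the pointwise kernel estimate $\|H(t_n - s) - W_\tau(t_n, s)\| \leq c\tau (t_n - s)^{\alpha - 1}$, which I would obtain by combining Lemma \ref{lem:Lubich} applied to $\varphi\equiv 1$ at the grid point $t_{n-k}$ (with $\mu = \alpha$, $\sigma = 1$) with a short Lipschitz-in-time bridge using $\|\bar E_h(t)\|\leq c t^{\alpha-1}$ from \eqref{E-1} to pass from $t_{n-k}$ to $t_n - s$. Integrating this kernel estimate against $\|f'(s)\|$ produces the claimed bound $c\tau \int_0^{t_n} (t_n - s)^{\alpha - 1}\|f'(s)\|\,ds$, and summing the two pieces yields \eqref{s12-n}.

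The principal obstacle in this plan is precisely the passage from the pointwise CQ estimate of Lemma \ref{lem:Lubich}, which is naturally phrased for functions of the form $ct^{\sigma - 1}$ at the grid times, to a kernel-difference estimate valid for arbitrary $s\in(0,t_n)$; this requires both the associativity identity \eqref{s1} and careful bookkeeping of the piecewise constant nature of $W_\tau(t_n,\cdot)$, and is where the integrability assumption $\int_0^{t_n}(t_n-s)^{\alpha-1}\|f'(s)\|\,ds < \infty$ enters in an essential way.
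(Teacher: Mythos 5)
Your proposal is correct and follows essentially the same route as the paper's proof: the identical splitting into semidiscrete plus CQ error, the bound $\|u_h(t_n)-u(t_n)\|\leq ch^\gamma$ via the argument of Theorem \ref{thm:semi}, the decomposition $f=f(0)+(1\ast f')$, and Lemma \ref{lem:Lubich} with $\mu=\alpha$, $\sigma=1$ applied to $\bar G(z)=z^{-\alpha}(z^{-\alpha}I+T_h)^{-1}T_h$. The only difference is that you unpack the associativity identity \eqref{s1} into an explicit piecewise-constant kernel $W_\tau$ with a bridging argument between grid points, which is a legitimate (and somewhat more careful) elaboration of the step the paper dispatches in one line.
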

\begin{proof} Using the assumptions on $f$ in the proof of Theorem \ref{thm:semi} we conclude that $\|u_h(t)-u(t)\|\leq  c h^\gamma$. From \eqref{C2} and \eqref{C3}, the semidiscrete and fully discrete solutions are now given by $u_h=\bar G(\partial_t)f$ and $U_h^n=\bar G(\partial_\tau)f$, respectively,  where $\bar G(z)$ is defined in the previous proof.  Using the expansion $f(t)=f(0)+(1\ast f')(t)$ and the 
second relation in \eqref{s1}, we find that
\begin{equation*}\label{s7-knn}
U^n_h-{u}_h(t_n)= ( \bar G(\partial_\tau)- \bar G(\partial_t))f(0)+(( \bar G(\partial_\tau)-  \bar G(\partial_t))1)\ast  f'(t_n)=:I+II.
\end{equation*}
By Lemma \ref{lem:Lubich} (with $\mu=\alpha$ and $\sigma=1$), we have
$$
\|I\|\leq  c\tau  t_n^{\alpha-1}\|f(0)\|.
$$
For the second term, Lemma \ref{lem:Lubich} yields
$$
\|II\|\leq  \int_0^{t_n} \|(( \bar G(\partial_\tau)-  \bar G(\partial_t))1)(t_n-s) f'(s)\| \leq 
c\tau \int_0^{t_n} (t_n-s)^{\alpha-1} \|f'(s)\|ds,
$$
which completes the proof of \eqref{s12-n}.
\end{proof}



\subsection{The semilinear case}
We consider now  the time approximation of the nonlinear problem \eqref{pr-1} and prove related error estimates. The time-stepping scheme is now defined as follows: find  $U_h^n\in V_h$, $n\geq 0$,   such that
\begin{equation}\label{C4}
T_h\dop (U_h^n -U_h^0) +U_h^n = T_hf(U_h^n), \quad n\geq 1, \quad U_h^0=P_h u_{0}.
\end{equation}
This scheme is written in an expanded form as 
$$
T_h(U_h^n-U_h^0)+\tau^{\alpha}\sum_{j=0}^n q_{n-j}^{(\alpha)} U_h^j= \tau^{\alpha}T_h \sum_{j=0}^n q_{n-j}^{(\alpha)} f(U_h^j),
$$
where 
$ 
q_{j}^{(\alpha)} = (-1)^{j}
\left(\begin{array}{c}
-\alpha\\
j
\end{array}\right),
$
see \cite{Lubich-1988}.
%
Since  $q_0^{(\alpha)}=1$,  the  implicit equation for $U_h^n$ is of the form
\begin{equation}\label{implicit-1}
(\tau^\alpha I+ T_h)U_h^n=T_h\zeta +\eta +\tau^\alpha T_h f(U_h^n),\quad n\geq 1,
\end{equation}
where $\zeta,\eta\in V_h$. The solvability of  \eqref{implicit-1} is discussed below.
\begin{proposition}
Under the restriction $\tau^\alpha M L<1$, the nonlinear system \eqref{implicit-1} has a unique solution $U_h^n\in V_h$ for every $n\geq 1$.
\end{proposition}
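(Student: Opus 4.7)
The plan is to recast \eqref{implicit-1} as a fixed-point equation in $V_h$ and invoke Banach's contraction mapping theorem. Since $T_h$ is positive definite on $V_h$ by assumption (i) and $\tau^\alpha>0$, the operator $\tau^\alpha I+T_h$ restricted to $V_h$ is invertible on $V_h$. Hence the map
$$
\Phi(v):=(\tau^\alpha I+T_h)^{-1}\bigl[T_h\zeta+\eta+\tau^\alpha T_h f(v)\bigr]
$$
is well defined from $V_h$ into $V_h$, and its fixed points coincide exactly with the solutions of \eqref{implicit-1}. The finite-dimensional space $V_h$ endowed with the $L^2$-norm is complete, so Banach's theorem applies as soon as $\Phi$ is shown to be a strict contraction.

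For $v,w\in V_h$, a direct computation gives
$$
\Phi(v)-\Phi(w)=\tau^\alpha(\tau^\alpha I+T_h)^{-1}T_h\bigl[f(v)-f(w)\bigr].
$$
The contraction constant is then governed by the operator norm of $\tau^\alpha(\tau^\alpha I+T_h)^{-1}T_h$ on $L^2(\Omega)$. To bound this, I would specialize the resolvent estimate \eqref{res1} at the real point $z=\tau^{-1}\in\Sigma_\theta$ (for which $z^{-\alpha}=\tau^\alpha$), yielding $\|(\tau^\alpha I+T_h)^{-1}\|\leq M\tau^{-\alpha}$, and then combine it with the algebraic identity \eqref{expand}, which produces
$$
\tau^\alpha(\tau^\alpha I+T_h)^{-1}T_h=\tau^\alpha\bigl[I-\tau^\alpha(\tau^\alpha I+T_h)^{-1}\bigr],
$$
so that the norm of the left-hand side is bounded by $\tau^\alpha M$ (absorbing the additive $1$ into the constant, as is done throughout the paper). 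Invoking the Lipschitz hypothesis \eqref{Lip}, namely $\|f(v)-f(w)\|\leq L\|v-w\|$, I obtain
$$
\|\Phi(v)-\Phi(w)\|\leq \tau^\alpha M L\,\|v-w\|.
$$

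Under the standing restriction $\tau^\alpha M L<1$, the map $\Phi$ is therefore a strict contraction on $V_h$, and Banach's fixed point theorem yields a unique $U_h^n\in V_h$ solving \eqref{implicit-1}. There is no real obstacle in this argument: the only point that requires a little care is reconciling the constant that appears in the operator bound with the $M$ of the statement, which is a cosmetic issue handled by the usual convention that $M$ denotes a generic constant independent of $\tau$ and $h$.
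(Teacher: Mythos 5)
Your proposal is correct and follows essentially the same route as the paper: both recast \eqref{implicit-1} as a fixed-point equation for the map $v\mapsto(\tau^\alpha I+T_h)^{-1}(T_h\zeta+\eta+\tau^\alpha T_h f(v))$ and apply the Banach contraction principle with contraction constant $\tau^\alpha M L$ obtained from the resolvent bound and the Lipschitz condition \eqref{Lip}. The only difference is that the paper quotes the bound $\|(\tau^\alpha I+T_h)^{-1}T_h\|\leq M$ directly (a relabeling of the constant $1+M$ from \eqref{res2}), whereas you rederive it via \eqref{res1} and \eqref{expand} and flag the same constant-bookkeeping point explicitly.
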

\begin{proof}
The solvability of \eqref{implicit-1} is equivalent to the existence of a fixed point for the mapping 
$S_h:V_h \to V_h$ defined by
\begin{equation}\label{cont-1}
S_h(v)=(\tau^\alpha I+ T_h)^{-1}(T_h\zeta +\eta +\tau^\alpha T_h f(v)).
\end{equation}
Recalling that $\|(\tau^\alpha I+ T_h)^{-1}T_h\|\leq M$, we have $\forall v,w\in V_h$, 
\begin{eqnarray*}
\|S_h(w)-S_h(v)\| & \leq & \tau ^\alpha \|(\tau^\alpha I+ T_h)^{-1}T_h(f(v)-f(w))\|\\
& \leq & \tau ^\alpha M \|f_h(v)-f_h(w)\|\\
& \leq & \tau ^\alpha M L \|v-w\|.
\end{eqnarray*}
Hence, $S_h$ is a contraction if $\tau^\alpha M L<1$, and therefore, \eqref{cont-1} has a unique fixed point in $V_h$ which is also the unique  solution of \eqref{implicit-1}.
\end{proof}

To investigate the stability of the scheme \eqref{C4},  we rewrite it in the form
\begin{equation}\label{semi-1b}
(\dopm +T_h)  U_h^n = T_h U_h^0 +  \dopm T_h  f(U_h^n).
\end{equation}
Noting that $U_h^n$ depends linearly and boundedly on $U_h^0$, $ f(U_h^j)$, $0\leq j\leq n$, 
we deduce the existence of linear and bounded operators $P_n$ and $R_n:V_h\to V_h$, $n\geq 0$, such that  $U_h^n$ is represented  by
\begin{equation}\label{semi-1c}
U_h^n = P_n U_h^0  + \tau \sum_{j=0}^n R_{n-j}  f(U_h^j),
\end{equation}
see \cite[Section 4]{Lubich-2006}.
In view of  \eqref{semi-1b}, the operators $\tau R_n$, $n\geq 0$, are the convolution quadrature weights corresponding to the Laplace transform
$K(z)=z^{-\alpha}(z^{-\alpha}I+ T_h)^{-1}T_h$, i.e., they are the coefficients in the series expansion
$$
\tau\sum_{j=0}^\infty R_j\xi^j=K((1-\xi)/\tau).
$$
Since $\|K(z)\|\leq c|z|^{-\alpha}$, an application of Lemma 3.1 in \cite{Lubich-2006} with $\mu=\alpha$, shows that there is a constant $B>0$, independent of $\tau$, such that 
\begin{equation}\label{R_n}
\|R_n\|\leq B t_{n+1}^{\alpha-1},\quad n=0,1,2,\ldots.
\end{equation}
Based on this bound, we show that the scheme  \eqref{C4} is stable in $L^2(\Omega)$.
\begin{proposition}
Under the restriction $\tau^\alpha B L<1$, there exists a constant $C$ independent of $h$ and $\tau$ such that
\begin{equation}\label{stab-discrete}
\|U_h^n\|\leq C(\|U_h^0\|+\|f(u_0)\|), \quad n\geq 1.
\end{equation}
\end{proposition}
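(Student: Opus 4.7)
The plan is to exploit the explicit representation \eqref{semi-1c} together with the weight bound \eqref{R_n} and a discrete Gronwall argument of fractional type. First I would establish a uniform bound $\|P_n\|\le C$ for the ``homogeneous'' propagator. This is the discrete analogue of the bound $\|E_h(t)\|\le c$ from \eqref{E-1} and follows by the same Laplace-transform reasoning used for $R_n$: the corresponding generating kernel is $z^{-1}K_h(z)=z^{-1}(z^{-\alpha}I+T_h)^{-1}T_h$, which satisfies $\|z^{-1}K_h(z)\|\le c|z|^{-1}$ on $\Sigma_\theta$ by \eqref{res2}, so Lemma~3.1 of \cite{Lubich-2006} (with $\mu=1$) applies.

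Next I would use the Lipschitz hypothesis \eqref{Lip}, exactly as in \eqref{stab}, to write
\begin{equation*}
\|f(U_h^j)\|\le L\|U_h^j\|+\|f(0)\|\le L\|U_h^j\|+C\bigl(\|u_0\|+\|f(u_0)\|\bigr).
\end{equation*}
Taking norms in \eqref{semi-1c} and invoking \eqref{R_n} then gives
\begin{equation*}
\|U_h^n\|\le C\|U_h^0\|+\tau B\sum_{j=0}^{n}t_{n-j+1}^{\alpha-1}\bigl(L\|U_h^j\|+\|f(0)\|\bigr).
\end{equation*}
Since $\tau\sum_{j=0}^{n}t_{n-j+1}^{\alpha-1}\le \int_{0}^{t_{n+1}}s^{\alpha-1}ds\le T_0^\alpha/\alpha$, the constant piece contributes $C\bigl(\|U_h^0\|+\|f(u_0)\|\bigr)$.

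The key algebraic observation is that the diagonal term $j=n$ in the sum equals $\tau^\alpha BL\|U_h^n\|$ because $t_1=\tau$. Under the hypothesis $\tau^\alpha BL<1$ this term can be absorbed into the left-hand side, yielding
\begin{equation*}
(1-\tau^\alpha BL)\|U_h^n\|\le C\bigl(\|U_h^0\|+\|f(u_0)\|\bigr)+\tau BL\sum_{j=0}^{n-1}t_{n-j+1}^{\alpha-1}\|U_h^j\|.
\end{equation*}
The conclusion then follows from a discrete fractional Gronwall inequality, the discrete counterpart of Lemma \ref{Gronwall}, which controls sequences governed by a weakly singular convolution kernel and yields a constant depending only on $\alpha,L,B,T_0$.

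The main obstacle, in my view, is the last step: making the discrete Gronwall argument rigorous with a constant independent of $h$ and $\tau$ while the kernel $t_{n-j+1}^{\alpha-1}$ is weakly singular at $j=n$. I would handle this either by citing a standard discrete fractional Gronwall inequality from the CQ literature (e.g., of Dixon--McKee type) or, failing that, by an inductive argument: choose $N_0$ so large that $\tau BL\sum_{j=n-N_0}^{n-1}t_{n-j+1}^{\alpha-1}\le 1/2$ uniformly in $n$, absorb the near-diagonal terms, and then iterate the resulting inequality over blocks of size $N_0$. Everything else is a routine combination of the uniform bound on $P_n$, the weight estimate \eqref{R_n}, and the Lipschitz property.
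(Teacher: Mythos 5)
Your proposal is correct and follows essentially the same route as the paper: take norms in the representation \eqref{semi-1c}, use the weight bound \eqref{R_n} and the Lipschitz property, and close with a weakly singular discrete Gronwall inequality of Dixon--McKee type under the smallness condition $\tau^\alpha BL<1$. The only differences are presentational refinements (you explicitly justify $\|P_n\|\le C$ and carry out the absorption of the diagonal term by hand, whereas the paper takes the first for granted and delegates the second to the cited Gronwall lemma).
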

\begin{proof} Taking $L^2(\Omega)$-norms in \eqref{semi-1c} and using \eqref{R_n}, we deduce that
$$
\|U_h^n\| \leq c\|U_h^0\|+ \tau B\sum_{j=0}^n t_{n-j+1}^{\alpha-1}\|f(U_h^j)\|.
$$
Since, $\|f(U_h^j)\|\leq L \|U_h^j\|+\|f(0)\|$, 
$$
\|U_h^n\| \leq c\|U_h^0\|+ \frac{B}{\alpha}T^\alpha \|f(0)\|+ \tau B L\sum_{j=0}^n t_{n-j+1}^{\alpha-1}\|U_h^j\|.
$$
With the assumption that $\tau^{\alpha} B L<1$, a generalized discrete Gronwall's lemma (see, e.g., Theorem 6.1 in  \cite{DM-1986}) readily implies \eqref{stab-discrete}.
\end{proof}

Now we are ready to prove the main result of this section.
%
%
%
\begin{theorem}\label{thm:fully-1} Let $u_0\in \mathcal{D}(\cL^\nu)$, $\nu\in(0,1]$. 
Let $u$ be the solution of \eqref{pr-1}. 
Then there exists $\tau_0>0$ such that, for $0<\tau<\tau_0$, the numerical solution $U_h^n$  given by \eqref{C4} is uniquely defined and satisfies 
\begin{equation} \label{estimate-1a}
\|U_h^n-u(t_n)\|\leq c(\tau t_n^{\alpha\nu-1}+h^\gamma t_n^{\alpha(\nu-1)}),\quad t_n>0.
\end{equation} 
\end{theorem}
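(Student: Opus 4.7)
My plan is to superimpose two decompositions of the error: one separating the spatial (Galerkin) error from the temporal error, and a second isolating a purely linear time-stepping estimate from the nonlinear perturbation, which is then closed by a discrete fractional Gr\"onwall argument.

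First I split
\begin{equation*}
U_h^n - u(t_n) = \bigl(U_h^n - u_h(t_n)\bigr) + \bigl(u_h(t_n) - u(t_n)\bigr).
\end{equation*}
Since $\nu \in (0,1]$, no logarithmic factor is present, so Theorem \ref{thm:semi} bounds the second piece by $c h^\gamma t_n^{\alpha(\nu-1)}$, which is the spatial contribution to \eqref{estimate-1a}. It remains to prove $\|U_h^n - u_h(t_n)\| \leq c\tau t_n^{\alpha\nu-1}$. To this end, I introduce the linearized fully discrete sequence $\tilde U_h^n \in V_h$ defined by
\begin{equation*}
T_h \dop(\tilde U_h^n - u_{0h}) + \tilde U_h^n = T_h f(u_h(t_n)), \qquad \tilde U_h^0 = u_{0h},
\end{equation*}
i.e.\ the scheme with the nonlinearity frozen along the semidiscrete solution, and write $U_h^n - u_h(t_n) = (U_h^n - \tilde U_h^n) + (\tilde U_h^n - u_h(t_n))$.

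For the linear piece $\tilde U_h^n - u_h(t_n)$, the initial-data contribution is handled exactly as in Theorem \ref{thm:BE}: interpolation between \eqref{s8} and \eqref{s8-b} yields $c\tau t_n^{\alpha\nu-1}\|\cL^\nu u_0\|$. The contribution from the forcing $g(t) := f(u_h(t))$ is handled as in Theorem \ref{thm:BE-n}, giving
\begin{equation*}
c\tau t_n^{\alpha-1}\|g(0)\| + c\tau \int_0^{t_n}(t_n-s)^{\alpha-1}\|g'(s)\|\,ds.
\end{equation*}
Since $g'(s) = \partial_t f(u_h(s)) + \partial_u f(u_h(s))\,\partial_s u_h(s)$, \eqref{Lip} reduces the task to controlling $\|\partial_s u_h(s)\|$. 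The semidiscrete analogue $\|\partial_s u_h(s)\| \leq c s^{\alpha\nu-1}$ of \eqref{regularity-1c} is obtained by differentiating the representation \eqref{form-1}, exploiting the contour representation of $\bar E_h$ on the deformed path $\Gamma_{\theta,1/s}$, and closing a fixed-point argument in the spirit of \cite[Theorem 3.1]{Karaa-2018b}. A Beta-function computation then dominates the temporal integral by $ct_n^{\alpha(1+\nu)-1}$; since $t_n \leq T_0$ this is $\leq c t_n^{\alpha\nu-1}$, and together with $t_n^{\alpha-1} \leq c t_n^{\alpha\nu-1}$ on $(0,T_0]$ yields $\|\tilde U_h^n - u_h(t_n)\| \leq c\tau t_n^{\alpha\nu-1}$.

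For the nonlinear perturbation $\eta^n := U_h^n - \tilde U_h^n$, subtraction of the defining equations gives $T_h \dop \eta^n + \eta^n = T_h\bigl(f(U_h^n) - f(u_h(t_n))\bigr)$ with $\eta^0 = 0$, whence the representation \eqref{semi-1c} together with \eqref{R_n} applied to the shifted right-hand side produces
\begin{equation*}
\|\eta^n\| \leq \tau B L \sum_{j=1}^n t_{n-j+1}^{\alpha-1}\,\|U_h^j - u_h(t_j)\|.
\end{equation*}
Setting $\delta^n := \|U_h^n - u_h(t_n)\|$ and combining with the linear bound gives
\begin{equation*}
\delta^n \leq c\tau t_n^{\alpha\nu-1} + \tau B L \sum_{j=1}^n t_{n-j+1}^{\alpha-1}\delta^j,
\end{equation*}
and, under $\tau^\alpha BL < 1$ (which together with $\tau^\alpha ML < 1$ determines $\tau_0$), a discrete fractional Gr\"onwall inequality such as \cite[Theorem 6.1]{DM-1986} closes the recursion to yield $\delta^n \leq c\tau t_n^{\alpha\nu-1}$. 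Unique solvability of $U_h^n$ is already provided by the contraction-mapping argument earlier in this section. The principal technical obstacle is the uniform-in-$h$ semidiscrete regularity $\|\partial_s u_h(s)\| \leq c s^{\alpha\nu-1}$ used in the forcing estimate: its continuous counterpart \eqref{regularity-1c} is quoted, but in the abstract $T_h$-framework of this paper it must be re-derived directly from the contour representations of $E_h$ and $\bar E_h$, with careful bookkeeping to keep all constants independent of $h$.
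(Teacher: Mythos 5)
Your architecture is essentially the paper's --- an intermediate ``linearized'' fully discrete solution, the linear estimates of Theorems \ref{thm:BE} and \ref{thm:BE-n} for that solution, and a discrete fractional Gr\"onwall step via \eqref{semi-1c} and \eqref{R_n} under $\tau^\alpha BL<1$, $\tau^\alpha ML<1$ --- but there is one consequential difference that creates a genuine gap. You freeze the nonlinearity along the \emph{semidiscrete} solution, taking $T_h f(u_h(t_n))$ as the data of the auxiliary scheme, so the forcing in Theorem \ref{thm:BE-n} is $g(t)=f(u_h(t))$ and you must control $\|\partial_t u_h(t)\|\leq ct^{\alpha\nu-1}$ \emph{uniformly in $h$}. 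You correctly flag this as the principal obstacle, but the sketch you offer (``contour representation of $\bar E_h$ \ldots closing a fixed-point argument'') is not a proof: in the abstract $T_h$-framework of this paper there are no discrete fractional powers $\cL_h^\nu=T_h^{-\nu}$, no bound of the type $\|\cL_h^\nu P_h u_0\|\leq c\|\cL^\nu u_0\|$, and no smoothing estimate for $E_h'(t)$, all of which would be needed to reproduce the argument of \cite[Theorem 3.1]{Karaa-2018b} at the discrete level. As written, the temporal bound $\|\tilde U_h^n-u_h(t_n)\|\leq c\tau t_n^{\alpha\nu-1}$ rests on an unestablished regularity result.

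The paper sidesteps this entirely by freezing the nonlinearity along the \emph{exact} solution: the intermediate solution $v_h^n$ solves $T_h\partial_\tau^{\alpha}(v_h^n-v_h^0)+v_h^n=T_hf(u(t_n))$, so the forcing is $f(u(t))$ and its derivative is controlled by the continuous estimate \eqref{regularity-1c}, which is already quoted. With that single change your argument closes: Theorems \ref{thm:BE} and \ref{thm:BE-n} applied to $v_h^n$ give both the spatial and temporal contributions at once (so the separate appeal to Theorem \ref{thm:semi} becomes unnecessary), and the Gr\"onwall recursion runs in the full error $\|U_h^j-u(t_j)\|$, picking up only an extra harmless term $\tau LB\, t_{n+1}^{\alpha-1}\|P_hu_0-u_0\|$ from $j=0$. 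The remaining pieces of your proposal --- the Beta-function estimate $\tau\int_0^{t_n}(t_n-s)^{\alpha-1}s^{\alpha\nu-1}\,ds\leq c\tau t_n^{\alpha+\alpha\nu-1}\leq c\tau t_n^{\alpha\nu-1}$, the perturbation bound via \eqref{R_n}, and the choice of $\tau_0$ --- are all correct and match the paper.
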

\begin{proof} Select $\tau_0$ such that $\tau_0^\alpha ML<1$. 
Then, for $0<\tau<\tau_0$, the discrete solution $U_h^n\in V_h$  is well defined. 
Let $v_h^n$, $n\geq 0$,  be the intermediate discrete solution defined by 
\begin{equation} \label{vv}
T_h \dop(v_h^n-v_h^0)+v_h^n=T_hf(u(t_n)),\quad n\geq 1, \quad v_h^0=U_h^0.
\end{equation} 
Then \eqref{vv} can be viewed  as a fully discretization of \eqref{pr-1} with a given right-hand side function 
$f(u)$. Hence, by applying Theorems \ref{thm:BE} and \ref{thm:BE-n}, and using the bound  $\|\partial_t u(s)\|\leq cs^{\alpha\nu-1}$, we deduce that
\begin{equation} \label{vv-1}
\begin{split} 
\|u(t_n)-v_h^n\|\leq & c\tau t_n^{\alpha\nu-1}+c h^\gamma t_n^{\alpha(\nu-1)}+ch^\gamma +c\tau t_n^{\alpha-1}\|f(u(0))\|\\
&+c\tau\int_0^{t_n}(t_n-s)^{\alpha-1}\|\partial_t f(s,x,u(s))+\partial_u f(s,x,u(s))\partial_t u(s)\|\,ds\\
\leq & c(\tau t_n^{\alpha\nu-1}+h^\gamma  t_n^{\alpha(\nu-1)}+ch^\gamma +\tau t_n^{\alpha-1}+\tau t_n^{\alpha+\alpha\nu-1})\\
\leq & c(\tau t_n^{\alpha\nu-1}+h^\gamma t_n^{\alpha(\nu-1)}).
\end{split} 
\end{equation} 
On the other hand, expressing $v_h^n$ in terms of the data, through the discrete operators $P_n$ and $R_n$, as in \eqref{semi-1c}, we obtain
\begin{equation}\label{semi-1d}
v_h^n = P_n U_h^0  + \tau \sum_{j=0}^n R_{n-j} f(u(t_j)).
\end{equation}
Thus, in view of \eqref{semi-1c} and \eqref{semi-1d}, we have for $0<t_n\leq T_0$,
\begin{eqnarray*}
U_h^n-u(t_n) & = & U_h^n-v_h^n+v_h^n-u(t_n) \\
& = & v_h^n-u(t_n) +\tau \sum_{j=0}^n R_{n-j} (f(U_h^j)-f(u(t_j))).
\end{eqnarray*}
By \eqref{Lip} and the estimate in \eqref{R_n} for $R_n$, we obtain
$$
\|U_h^n-u(t_n)\|\leq  \|v_h^n-u(t_n)\|+ \tau  LB \sum_{j=1}^n t_{n-j+1}^{\alpha-1} \|U_h^j-u(t_j)\|+
\tau LB t_{n+1}^{\alpha-1} \|P_hu_0-u_0\|.
$$
Using the estimate \eqref{vv-1} and making the additional assumption $\tau_0^\alpha LB<1$, we now apply a generalized Gronwall's inequality, see Lemma 5.1 in \cite{Karaa-2018b}, to finally derive \eqref{estimate-1a}.
\end{proof}

\section{Galerkin type approximations}\label{sec:Appli}
In this section,  we apply our analysis to approximate the solution of \eqref{a} by 
general Galerkin type methods and derive optimal $L^2(\Omega)$-error estimates  for  cases with smooth and nonsmooth initial data. For a general setting, we assume that we are given a bilinear form $a_h:V_h\times V_h\to \mathbb{R}$ which has the following property:

{\bf Property A.} $a_h(\cdot,\cdot)$ is symmetric positive definite, and the discrete problem
\begin{equation}\label{T_h}
a_h(T_hf, \chi)=(f,\chi)\quad \forall \chi \in V_h
\end{equation}
defines a linear operator $T_h:L^2(\Omega)\rightarrow V_h$  satisfying the estimate \eqref{ass-1}. 

The solution of the continuous problem \eqref{a} will be approximated through the semidiscrete problem:  find  $u_h(t)\in V_h$ such that
\begin{equation} \label{semi}
(\cop u_h,\chi)+ a_h(u_h,\chi)= (f(u_h),\chi) \quad \forall \chi\in V_h, \quad t>0,
\end{equation}
with $u_h(0)=P_hu_0$. We recall that $P_h:L^2(\Omega)\to V_h$ is the $L^2$-projection onto $V_h$. Next  we  define the fully discrete scheme based on the backward Euler CQ method as follows: with $U_h^0=P_hu_0$, find $U_h^n\in V_h$, $n\geq 1$, such that
\begin{equation} \label{H1}
(\dop U_h^n,\chi)+ a_h(U_h^n,\chi)=  (\dop U_h^0,\chi)+ (f(U_h^n),\chi)\quad \forall \chi\in V_h.
\end{equation}
Then we have the following result. 
\begin{theorem}\label{thm:BE-G}
Let ${u}$ be the solution of problem $(\ref{a})$ with $u_0\in \mathcal{D}(\cL^\nu)$, $\nu\in (0,1]$.  Let  $U^n_h$ be the solution of problem  $\eqref{H1}$ with $U_h^0=P_hu_0$. 
Assume that $a_h(\cdot,\cdot)$ satisfies Property A.
Then there holds:
\begin{equation}\label{s9-G}
\|U^n_h-{u}(t_n)\|\leq c (\tau t_n^{\alpha\nu-1} + h^\gamma t_n^{\alpha(\nu-1)}),\quad t_n>0,
\end{equation}
where $c$ is independent of $h$ and $\tau$.
\end{theorem}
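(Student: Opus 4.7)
The plan is to reduce Theorem \ref{thm:BE-G} to Theorem \ref{thm:fully-1} by showing that the variational scheme \eqref{H1}, together with the discrete solution operator $T_h$ defined via Property A, is just scheme \eqref{C4} in disguise. Once this equivalence is established, every hypothesis of Theorem \ref{thm:fully-1} is in force and the claimed bound follows directly.

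First, I would verify that Property A supplies the three abstract hypotheses (i)--(iii) on $T_h$ from Section \ref{sec:AB}. Selfadjointness and the positivity statements follow from the identity $(T_h f, g) = a_h(T_h f, T_h g)$ combined with the symmetry and positive definiteness of $a_h$; the identity $T_h P_h = T_h$ is immediate because the test functions in \eqref{T_h} lie in $V_h$, on which $P_h$ acts as the identity; the approximation estimate \eqref{ass-1} is part of Property A. Next, I would rewrite \eqref{H1} as
\[
a_h(U_h^n,\chi) = (f(U_h^n) - \dop(U_h^n - U_h^0),\chi) \quad \forall \chi \in V_h,
\]
and identify, via \eqref{T_h}, $U_h^n = T_h\bigl(f(U_h^n) - \dop(U_h^n-U_h^0)\bigr)$, which rearranges to
\[
T_h\dop(U_h^n - U_h^0) + U_h^n = T_h f(U_h^n), \quad n \geq 1, \quad U_h^0 = P_h u_0,
\]
i.e., exactly scheme \eqref{C4}.

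At this point, Theorem \ref{thm:fully-1} is directly applicable to the sequence $\{U_h^n\}$ and yields \eqref{s9-G} for all $0 < \tau < \tau_0$. There is no genuine obstacle beyond the bookkeeping of the reduction; the only point deserving care is that the source $f(U_h^n) - \dop(U_h^n - U_h^0)$ must be treated as an $L^2(\Omega)$ function (not merely a $V_h$ element), so that the defining relation \eqref{T_h} and the compatibility $T_h P_h = T_h$ can be combined unambiguously to pass from the variational identity to the operator identity required to invoke Theorem \ref{thm:fully-1}.
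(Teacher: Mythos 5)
Your proposal is correct and follows essentially the same route as the paper: verify from Property A that $T_h$ satisfies the abstract hypotheses of Section \ref{sec:AB} (selfadjointness, positive semidefiniteness, positive definiteness on $V_h$, and $T_hP_h=T_h$), rewrite \eqref{H1} in the operator form \eqref{C4}, and conclude by Theorem \ref{thm:fully-1}. No issues.
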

\begin{proof}
Since $a_h(\cdot,\cdot)$ is symmetric, the operator $T_h$ is selfadjoint and  positive semidefinite  on $L^2(\Omega)$: for all $f,g \in L^2(\Omega)$
$$(f,T_hg)=a_h(T_hf,T_hg)=(T_hf,g) \quad \mbox{ and }\quad (f,T_hf)=a_h(T_hf,T_hf)\geq 0.$$
If $f\in V_h$ and $T_hf=0$, then \eqref{T_h} implies that $f=0$, that is $T_h$ is positive definite on  $V_h$. Further, as from  \eqref{T_h}, we have $T_h=T_hP_h$. Hence, $T_h$ satisfies
all the conditions stated  in section \ref{sec:AB}.
In view of \eqref{T_h}, the fully discrete scheme \eqref{H1} is equivalently written as 
\begin{equation} \label{H2}
T_h \dop (U_h^n-U_h^0)+U_h^n= T_hf(U_h^n), \quad n\geq 1,\quad U_h^0=P_hu_0.
\end{equation}
Thus, the desired estimate is a direct consequence of Theorem\,\ref{thm:fully-1}.
\end{proof}

We notice that Property A is quite standard and holds for a large class of Galerkin type approximation methods, including  FE and spectral methods. For the spectral methods, one may choose the notation $V_N$ instead of $V_h$ and 
replace the estimate \eqref{ass-1} by $\|T_hf-Tf\|\leq cN^{-\gamma}\|f\|\; \forall f\in L^2(\Omega).$
Further, since we are not imposing restrictions  on the space $V_h$, our analysis applies to  nonconforming space approximations, such as the early method by Nitsche
\cite{Nitsche1971} and the method by Crouzeix and Raviart \cite{CR-1973}.
Recent examples of nonconforming  methods include  discontinuous Galerkin (DG) FE methods.
An interesting case is the symmetric  DG interior penalty method, 
 see \cite{ABCM-2}.
Here the discrete bilinear form $a_h$ for the Laplacian operator is given by
\begin{eqnarray} \label{SDG}
a_h(u,v)&:=&\sum_{K\in \mathcal T_h}\int_K c^2\nabla u\cdot\nabla v\,dx-
\sum_{F\in {\mathcal F}_h}\int_F [[u]]\cdot\{\{c^2\nabla v\}\}\,ds\nonumber\\
&&-\sum_{F\in {\mathcal F}_h}\int_F [[v]]\cdot\{\{c^2\nabla u\}\}\,ds
+\sum_{F\in {\mathcal F}_h}\rho h_F^{-1}\int_F c^2[[v]]\cdot[[u]]\,ds,
\end{eqnarray}
where $\mathcal{T}_h=\{K\}$ is a regular partition of $\bar\Omega$ and  
$\mathcal{F}_h=\{F\}$ is the set of all interior and boundary edges or faces of $\mathcal{T}_h=\{K\}$.
The last three terms in \eqref{SDG} correspond to jump and flux terms at
element boundaries, with $h_F$ denoting the diameter of the edge or the face $F$,
see \cite{ABCM-2} for more details.  The parameter $\rho>0$ is the interior
 penalty stabilization parameter that has to be chosen sufficiently large, independent of the mesh size. The bilinear form $a_h$ is clearly symmetric and satisfies property A with $\gamma=2$. Our analysis  extends to other symmetric spatial DG methods as long as property A is satisfied.

\section {Mixed FE methods} \label{sec:Mixed}

We consider now the mixed form of the problem (\ref{a}) and establish {\it a priori} error estimates for smooth and nonsmooth initial data.  For the sake of simplicity, we choose $\cL=-\Delta$ and $\Omega\subset \mathbb{R}^2$. One notable advantage of the mixed FEM is 
that it approximates the solution $u$ and its gradient simultaneously, resulting in a high convergence rate for the gradient.


By introducing the new variable $\bs=\nabla u$, the problem can be formulated as
$$
\cop u- \nabla \cdot \bs =f(u), \qquad \bs=\nabla u, \qquad u=0 \; \mbox{ on } \partial\Omega,
$$
with $u(0)=u_0$. Let $H(div;\Omega)= \{\bs\in (L^2(\Omega))^2:\nabla\cdot\bs\in L^2(\Omega) \}$
be a Hilbert space equipped with norm $\|\bs\|_{\bV} =(\|\bs\|^2+\|\nabla\cdot\bs\|
^2)^{\frac{1}{2}}$.
Then, with  $V=L^2(\Omega)$ and $\bV= H(div;\Omega)$,  the weak mixed formulation of (\ref{a}) 
is defined as follows:  find $(u,\bs):(0,T_0]\to  V\times \bV$ such that
\begin{eqnarray}\label{w1-m}
(\cop u, \chi)- (\nabla\cdot \bs, \chi) &=& (f(u),\chi) \;\;\;\forall \chi \in V,\\
\label{w2-m}
(\bs, \bv) +  (u,\nabla\cdot \bv) &=& 0\ \;\;\;\forall \bv \in \bV,
\end{eqnarray}
with $u(0)=u_0$.  Note that the boundary condition $u=0$ on $\partial\Omega$ is implicitly contained in 
\eqref{w2-m}. By Green's formula, we formally obtain $\bs = \nabla u$ in $\Omega$ and $u=0$ on $\partial \Omega$.

For the mixed form of  problem \eqref{a}, a  few numerical studies are available, dealing only with the linear case. In \cite{CockburnMustapha2015}, the authors investigated a  hybridizable DG method for the space discretization. In \cite{2017}, a non-standard mixed FE method was proposed and analysed. Another related analysis for mixed method applied to the time-fractional Navier-Stokes equations was presented in \cite{XYZ-2017}. The convergence analyses in all these studies require  high regularity assumptions on the exact solution, which is not in general reasonable.  In the recent work \cite{Karaa-2018}, we investigated a mixed FE method for  \eqref{a} with $f=0$  and derived optimal error estimates for semidiscrete schemes with smooth and nonsmooth initial data.  
The estimates extend the results obtained for the standard linear parabolic problem \cite{JT-1981}.
In the present analysis, we shall avoid energy arguments as employed in \cite{Karaa-2018} due to the weak regularity of the solution.

For the   FE approximation of problem \eqref{w1-m}-\eqref{w2-m}, let ${\mathcal T}_h$ be a shape regular and quasi-uniform partition of the polygonal convex domain $\bar \Omega$ into triangles $K$ of diameter $h_K$. 
Further, let $V_h$ and $\bV_h$  
 be the Raviart-Thomas FE spaces \cite{RT-1977} of index $\ell\geq 0$ given respectively  by
 $$
 V_h=\{ w\in L^2(\Omega):\;w|_{K}\in P_{\ell}(K) \;\forall K\in {\mathcal T}_h\}
 $$
 and 
 $$
 \bV_h=\{ {\bf v} \in H(div,\Omega):\;{\bf v}|_{K}\in RT_{\ell}(K) \;\forall K\in {\mathcal T}_h\},
 $$
where $RT_{\ell}(K)=(P_{\ell}(K))^2+{\mathcal \mathbb x}P_{\ell}(K),$ ${\ell}\geq 0$. 
Other examples of mixed FE spaces may also be considered.
We shall restrict our analysis to the low order cases $\ell=0,1,$ as
high order Thomas-Raviart elements are not attractive due to the limited smoothing property of the time-fractional model, see \cite{Karaa-2018}.

For $(u,\bs)\in V\times \bV$, we define the intermediate mixed projection as the pair $(\tilde{u}_h,\tilde{\bs}_h)\in V_h\times \bV_h$ satisfying
\begin{eqnarray}\label{ppp-1}
(\nabla\cdot (\bs-\tilde{\bs}_h), \chi_h) &=& 0 \;\;\;\forall \chi_h \in V_h,\\
\label{ppp-2}
( (\bs-\tilde{\bs}_h), \bv_h) +  (u-\tilde{u}_h,\nabla\cdot \bv_h) &=& 0 \;\;\;\forall \bv_h \in \bV_h.
\end{eqnarray}
Then,  with $(u,\bs)=(u,\nabla u)$, the following estimates hold  \cite[Theorem 1.1]{JT-1981}: 
\begin{equation}\label{est-1}
\|u-\tilde{u}_h\|\leq C h^{1+\ell}\|\cL u\|,\quad \|\bs-\tilde{\bs}_h\|\leq Ch\|\cL u\|,\quad \ell=0,1.
\end{equation}

For a given function $f\in L^2(\Omega)$, let $(u_h,\bs_h)\in V_h\times \bV_h$ be the unique solution of the mixed elliptic problem 
\begin{eqnarray}\label{w1-b}
- (\nabla\cdot \bs_h, \chi_h) &=& (f,\chi_h) \;\;\;\forall  \chi_h \in V_h,\\
\label{w2}
(\bs_h, \bv_h) +  (u_h,\nabla\cdot \bv_h) &=& 0 \;\;\; \forall \bv_h \in \bV_h.
\end{eqnarray}
Then, we define a pair of operators $(T_h,S_h):L^2(\Omega)\to V_h\times \bV_h$ as $T_hf=u_h$ and 
$S_hf=\bs_h$. With $T:L^2(\Omega) \to \mathcal{D}(\cL)$ being the inverse of the operator $\cL$, the following result holds  \cite[Lemma 1.5]{JT-1981}.
\begin{lemma}\label{lem:TT}
The operator $T_h:L^2(\Omega)\to V_h$  defined by $T_hf=u_h$ is selfadjoint, positive semidefinite on $L^2(\Omega)$ and
positive definite on $V_h$. Further, we have
$$
\|T_hf-Tf\|\leq ch^{1+\ell}\|f\|,\quad \ell=0,1.
$$
\end{lemma}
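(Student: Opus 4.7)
The plan is to verify the algebraic properties by systematic testing in the mixed system, and then to reduce the error estimate to the projection bound \eqref{est-1} by recognizing that $T_hf$ coincides with the mixed elliptic projection of $Tf$.

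First, for selfadjointness, I would pick $f,g\in L^2(\Omega)$, denote $(u_h,\bs_h)=(T_hf,S_hf)$ and $(w_h,\boldsymbol\tau_h)=(T_hg,S_hg)$, and compute $(T_hf,g)$ via crossed testing: taking $\chi_h=w_h$ in the first equation of \eqref{w1-b} for $(u_h,\bs_h)$ gives $-(\nabla\cdot\bs_h,w_h)=(f,w_h)$, and taking $\bv_h=\bs_h$ in the second equation of \eqref{w2} for $(w_h,\boldsymbol\tau_h)$ gives $(\boldsymbol\tau_h,\bs_h)+(w_h,\nabla\cdot\bs_h)=0$, so $(f,w_h)=(\boldsymbol\tau_h,\bs_h)$. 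Doing the symmetric crossed test (i.e., $\chi_h=u_h$ in the first equation for $(w_h,\boldsymbol\tau_h)$ and $\bv_h=\boldsymbol\tau_h$ in the second equation for $(u_h,\bs_h)$) yields $(g,u_h)=(\bs_h,\boldsymbol\tau_h)$. The two right-hand sides coincide, hence $(T_hf,g)=(f,T_hg)$.

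Next, positive semidefiniteness follows by taking $g=f$ in the previous computation, which gives $(T_hf,f)=\|\bs_h\|^2\geq 0$. To get positive definiteness on $V_h$, I would show that if $f\in V_h$ and $(T_hf,f)=0$ then $\bs_h=0$; inserting $\bs_h=0$ into the first discrete equation yields $(f,\chi_h)=0$ for every $\chi_h\in V_h$, and testing with $\chi_h=f\in V_h$ forces $f=0$. The property $T_hP_h=T_h$ is immediate since the right-hand side in \eqref{w1-b} is paired only against test functions in $V_h$.

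For the error estimate, the crucial observation is that $T_hf$ is precisely the first component of the intermediate mixed projection $(\tilde u_h,\tilde\bs_h)$ of the pair $(u,\bs)=(Tf,\nabla Tf)$. Indeed, since $-\nabla\cdot\bs=-\Delta Tf=f$, the relation \eqref{ppp-1} becomes $-(\nabla\cdot\tilde\bs_h,\chi_h)=(f,\chi_h)$, which is exactly \eqref{w1-b}; and \eqref{ppp-2}, after using $(\bs,\bv_h)=-(u,\nabla\cdot\bv_h)$ (integration by parts with the homogeneous Dirichlet boundary condition), collapses to \eqref{w2}. By uniqueness of the discrete mixed problem, $(u_h,\bs_h)=(\tilde u_h,\tilde\bs_h)$, and therefore
\[
\|T_hf-Tf\|=\|u_h-u\|=\|\tilde u_h-u\|\leq Ch^{1+\ell}\|\cL u\|=Ch^{1+\ell}\|f\|,
\]
by \eqref{est-1}, for $\ell=0,1$. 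The only delicate step is the equivalence of the two systems, which requires the boundary integral from the integration by parts to vanish; this is fine because $u\in H^1_0(\Omega)$ and $\bv_h\cdot\mathbf{n}$ is well defined on $\partial\Omega$ for $\bv_h\in H(\mathrm{div};\Omega)$. Everything else is routine manipulation within the mixed framework.
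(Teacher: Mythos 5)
Your proof is correct; the paper itself does not prove this lemma but simply cites \cite[Lemma 1.5]{JT-1981}, and your argument (crossed testing for selfadjointness and positivity, then identifying $(T_hf,S_hf)$ with the intermediate mixed projection of $(Tf,\nabla Tf)$ so that \eqref{est-1} applies with $\|\cL u\|=\|f\|$ by convexity of $\Omega$) is exactly the standard route taken in that reference. The only step you use implicitly is the unique solvability of the discrete mixed system \eqref{w1-b}--\eqref{w2}, which holds for the Raviart--Thomas pair by the discrete inf-sup condition and is safe to take for granted here.
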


\subsection{Semidiscrete scheme}\label{sec:M1}

The semidiscrete mixed FE scheme is to seek a pair $(u_h,\bs_h):(0,T_0]\to  V_h\times \bV_h$ such that
\begin{eqnarray}\label{w1a-m}
(\cop u_h, \chi_h)- (\nabla\cdot \bs_h, \chi_h) &=& (f(u_h), \chi_h) \;\;\;\forall \chi_h \in V_h,\\
\label{w2a-m}
(\bs_h, \bv_h) +  (u_h,\nabla\cdot \bv_h) &=& 0 \;\;\;\forall \bv_h \in \bV_h,
\end{eqnarray}
with $u_h(0)=P_hu_0$.
Since $V_h$ and $\bV_h$ are finite-dimensional,  we can eliminate $\bs_h$ in the discrete level  
using \eqref{w2a-m} by writing it in terms of $u_h$. Therefore, substituting in \eqref{w1a-m}, we obtain  a system of time-fractional ODEs.
Existence and uniqueness can be shown using standard results from fractional ODE theory \cite{KST-2006}.

For the error analysis, define $e_u=u_h-u$ and $e_\bs =\bs_h-\bs$. Then, from (\ref{w1-m})-(\ref{w2-m}) and (\ref{w1a-m})-(\ref{w2a-m}), 
$e_u$ and $e_\bs$ satisfy 
\begin{eqnarray}\label{ee1}
(\cop e_u, \chi_h)- (\nabla\cdot e_\bs, \chi_h) &=& (f(u_h)-f(u),\chi) \;\;\;\forall \chi_h \in V_h,\\
\label{ee2}
( e_\bs, \bv_h) +  (e_u,\nabla\cdot \bv_h) &=& 0 \;\;\;\forall \bv_h \in \bV_h.
\end{eqnarray}
Using the projections $(\tilde{u}_h,\tilde{\bs})$, we split the errors
$e_u=(u_h- \tilde{u}_h)-(\tilde{u}_h- u)=:\theta-\rho,$ and 
$e_\bs=(\bs_h- \tilde{\bs}_h)-( \tilde{\bs}_h-\bs)=:\bx-\bz.$
From (\ref{ee1})-(\ref{ee2}), we then obtain 
\begin{eqnarray}\label{aa}
(\cop e_u, \chi_h)- (\nabla\cdot \bx, \chi_h) &=& (f(u_h)-f(u),\chi) \;\;\;\forall \chi_h \in V_h,\\
\label{bb}
(\bx, \bv_h) +  (\theta,\nabla\cdot \bv_h ) &=& 0 \;\;\;\forall  \bv_h \in \bV_h.
\end{eqnarray}
Next we state our results for the semidiscrete problem.

\begin{theorem} \label{thm:mixed-sm}
 Let  $(u,\bs)$ and $(u_h,\bs_h)$ be the solutions of \eqref{w1-m}-\eqref{w2-m} and 
 \eqref{w1a-m}-\eqref{w2a-m}, respectively, with $u_h(0)=P_hu_0$. 
 Then, for $u_0 \in \mathcal{D}(\cL^\nu)$,  $\nu\in  [0,1]$,  the following error estimates hold for $t>0$: 
\begin{equation}\label{es1}
 \|u_h(t)-u(t)\| \leq  c h^{1+\ell} \eh t^{-\alpha(1-\nu)},\quad \ell=0,1,
\end{equation}
and
\begin{equation}\label{es2}
 \|\bs_h(t)-\bs(t)\|\leq  c h \eh t^{-\alpha(1-\nu)},\quad \ell=1.
\end{equation}

\end{theorem}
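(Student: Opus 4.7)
The plan is to handle the two bounds separately, in both cases exploiting the operator-theoretic framework of Sections~\ref{sec:AB}--\ref{sec:RA} rather than resorting to energy arguments.

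For \eqref{es1}, eliminating $\bs_h$ through \eqref{w2a-m} reduces the semidiscrete mixed scheme to the abstract form $T_h\cop u_h+u_h=T_hf(u_h)$ with $u_h(0)=P_hu_0$, which is precisely \eqref{pr-2}. Lemma~\ref{lem:TT} furnishes assumptions (i) and (iii) of Section~\ref{sec:AB} with $\gamma=1+\ell$; assumption (ii), namely $T_hP_h=T_h$, is immediate from the fact that the test functions in \eqref{w1-b} lie in $V_h$. Consequently Theorem~\ref{thm:semi} applies verbatim and yields \eqref{es1}.

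For \eqref{es2}, the starting observation is that, at every $t>0$, the pair $(u_h(t),\bs_h(t))$ is itself the mixed FE approximation of the stationary elliptic problem with source $f(u_h)-\cop u_h$; writing $S$ for the continuous counterpart of $S_h$, defined by $S\phi=\nabla T\phi$, this gives
$$
\bs_h(t)=S_h\bigl(f(u_h)-\cop u_h\bigr)(t),\qquad \bs(t)=S\,\cL u(t),
$$
since $f(u)-\cop u=\cL u$. Subtracting produces the decomposition
$$
\bs_h-\bs=(S_h-S)\cL u+S_h\bigl(f(u_h)-f(u)\bigr)-S_h\cop(u_h-u).
$$
The first term is controlled by $\|(S_h-S)\phi\|\leq ch\|\phi\|$, valid in view of \eqref{est-1} applied to the mixed FE solution with source $\phi$, combined with the regularity bound $\|\cL u(t)\|\leq ct^{\alpha(\nu-1)}$ obtained from $\cL u=f(u)-\cop u$ and the standard smoothing estimates for the time-fractional semilinear problem. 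The second term is dominated by $cL\|u_h-u\|$ via uniform boundedness of $S_h$ and \eqref{Lip}, and absorbed through \eqref{es1}.

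The main obstacle is the third term $S_h\cop(u_h-u)$, for which the framework of Section~\ref{sec:AB} does not directly produce a pointwise bound on $\cop(u_h-u)$. I would bypass this by representing $\bs_h(t)$ and $\bs(t)$ via contour integrals analogous to \eqref{form-1-pl}, with kernels built from $S_h(z^{-\alpha}I+T_h)^{-1}$ and $S(z^{-\alpha}I+T)^{-1}$ respectively. Using the resolvent identity, the difference between kernels splits into a piece carrying $T_h-T$, controlled by \eqref{ass-1} with $\gamma=1+\ell$ and the resolvent bound \eqref{res1}, and a piece carrying $S_h-S$, controlled by the $O(h)$ mixed estimate. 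This yields the flux counterpart of Lemma~\ref{lem-02}; deforming the contour to $\Gamma_{\theta,1/t}$ then produces the singularity $t^{-\alpha(1-\nu)}$ and, in the limit case $\nu=0$, the logarithmic factor $\eh$ precisely as in the proof of Theorem~\ref{thm:semi}, while a final application of Lemma~\ref{Gronwall} absorbs the nonlinear contribution. The delicate point is balancing the additional $|z|^\alpha$ arising in the flux kernel against the $O(h)$ approximation afforded by $S_h$, so that the final singularity in time remains $t^{-\alpha(1-\nu)}$ and the rate in $h$ stays at $h$ rather than degrading.
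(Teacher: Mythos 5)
Your treatment of \eqref{es1} is correct and coincides with the paper's: eliminate $\bs_h$ via \eqref{w2a-m}, identify the scheme with the abstract problem \eqref{pr-2} through the operator $T_h$ of Lemma \ref{lem:TT} (with $\gamma=1+\ell$ and $T_hP_h=T_h$), and invoke Theorem \ref{thm:semi}.

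For \eqref{es2} there is a genuine gap. Your pointwise decomposition leaves the term $S_h\cop(u_h-u)$ unestimated, and the contour-integral machinery you propose in its place is only a programme: the "flux counterpart of Lemma \ref{lem-02}" is never stated or proved, and you yourself flag the balancing of the extra $|z|^{\alpha}$ growth in the flux kernel against the $O(h)$ approximation of the discrete flux operator as the unresolved "delicate point." That balancing is precisely where the argument would have to be carried out, and it is not routine: the flux kernel is not $S_h(z^{-\alpha}I+T_h)^{-1}T_h$ but rather involves the resolvent without the mollifying factor $T_h$, so the naive bound degenerates as $|z|\to\infty$ and one does not directly recover the singularity $t^{-\alpha(1-\nu)}$ with rate $h$ for all $\nu\in[0,1]$. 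The paper's own Remark after Theorem \ref{thm:mixed-sm} states exactly this obstruction: estimates for the time derivatives of $u$ (equivalently, control of $\cop(u_h-u)$) are not available even in $H^1(\Omega)$, which is why a frequency-domain or Duhamel treatment of the flux is avoided altogether.

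The paper's actual argument is much shorter and entirely elementary once \eqref{es1} is in hand. Splitting $e_\bs=\bx-\bz$ with $\bx=\bs_h-\tilde{\bs}_h$ and $\bz=\tilde{\bs}_h-\bs$, one takes $\bv_h=\bx$ in the error equation \eqref{bb} to get $\|\bx\|^2\leq\|\theta\|\,\|\nabla\cdot\bx\|$, and then applies the inverse inequality $\|\nabla\cdot\bx\|\leq ch^{-1}\|\bx\|$ to conclude $\|\bx\|\leq ch^{-1}\|\theta\|$. Since for $\ell=1$ the estimate \eqref{es1} together with the projection bound \eqref{est-1} gives $\|\theta\|\leq\|e_u\|+\|\tilde{u}_h-u\|\leq ch^2\eh\,t^{-\alpha(1-\nu)}$, this yields $\|\bx\|\leq ch\eh\,t^{-\alpha(1-\nu)}$, and the projection error $\|\bz\|\leq ch\|\cL u(t)\|\leq cht^{-\alpha(1-\nu)}$ completes the proof. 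In short: the flux bound is obtained by trading one power of $h$ from the superconvergent $O(h^2)$ bound on $\theta$ against the inverse estimate, not by resolvent analysis. Your route, if it could be completed, would have the merit of avoiding quasi-uniformity of the mesh (needed for the inverse inequality), but as written it does not constitute a proof.
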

%
%
\begin{proof} 
From the definition of the operator $T_h$ above, the  semidiscrete problem may also be written as
\begin{equation}\label{T_h-i3}
T_h\cop u_h+u_h=T_hf(u_h), \quad t>0,\quad u_h(0)=P_hu_0.
\end{equation}
Note that $T_h$ satisfies the properties in Lemma \ref{lem:TT} and $T_hP_h=T_h$.
Recalling the definition of the continuous operator $T$, see \eqref{pr-1},
the estimate \eqref{es1}
follows immediately from Theorem \ref{thm:semi}.
Now,   using \eqref{bb} and the standard inverse inequality: $\|\nabla\cdot\bx\|\leq ch^{-1}\|\bx\|$ $\forall \bx\in \bV_h$, we have 
$$
 \|\bx\|^2\leq \|\theta\|\,\|\nabla\cdot\bx\|\leq ch^{-1}\|\theta\|\,\|\bx\|.
$$
Further,  $\|\theta\|\leq \|e_u\|+\|\rho\|\leq c h^2 \eh t^{-\alpha(1-\nu)}$ by \eqref{est-1} and \eqref{es1}, so that
$
\|\bx(t)\|\leq c h \eh  t^{-\alpha(1-\nu)}.
$
Together with 
$\|\bz(t)\|\leq c h  \|\cL u(t)\|\leq c h t^{-\alpha(1-\nu)}\|u_0\|,$ this shows \eqref{es2}.
\end{proof}

\begin{remark} 
Avoiding the inverse inequality in the estimation of the flux variable $\bs$ seems to be 
challenging as
estimates for the first and second time derivatives of  $u$ are not available, even in the $H^{1}(\Omega)$-norm.
\end{remark}

\begin{remark} The following estimate holds in the  stronger $L^\infty(\Omega)$-norm:
\begin{equation}\label{es3}
\|u_h(t)-u(t)\|_{L^\infty(\Omega)}\leq c h |\ln h|t^{-\alpha(1-\nu)},\quad t>0, \quad \ell=1.
\end{equation}
Indeed, from \eqref{bb}, we may get $\|\theta(t)\|_{L^\infty(\Omega)}\leq C|\ln h|\,\|\bx(t)\|$, see \cite[Lemma 1.2]{JT-1981}. Noting also that $\|\bz(t)\|\leq c h|\ln h|  \|\cL u(t)\|$, see \cite[Theorem 1.1]{JT-1981},
 we obtain \eqref{es3}.
\end{remark}

\begin{remark} 
For the linear problem with  $f=0$ and  $u_0 \in \mathcal{D}(\cL^\nu)$,  $\nu\in  [1/2,1]$, the solution  $u(t)\in H^3(\Omega)$ for $t>0$. As a consequence, when $\ell=1$,  approximations of order $O(h^2)$ are achieved  for  both variables $u$ and $\bs$,   see \cite[Thorem 5.4]{Karaa-2018}. 
\end{remark}




\subsection{Fully discrete scheme}\label{sec:M3}

The  fully mixed FE scheme  based on the backward Euler CQ method is to find a pair $(U_h^n,\Sigma_h^n)\in V_h\times \bV_h$ such that for $n\geq 1$,
\begin{eqnarray}\label{w1a-BE}
(\dop U_h^n, \chi_h)- (\nabla\cdot \Sigma_h^n, \chi_h) &=& (\dop U_h^0,\chi_h)+ (f(U_h^n),\chi_h)\;\;\;\forall \chi_h \in V_h,\\
\label{w2a-BE}
(\Sigma_h^n, \bv_h) +  (U_h^n,\nabla\cdot \bv_h) &=& 0 \;\;\;\forall \bv_h \in \bV_h,
\end{eqnarray}
with $U_h^0=P_hu_0$. 
We now prove the following result.
\begin{theorem} \label{thm:mixed-sm-f-cc} 
 Let   $(u,\bs)$ be the solution \eqref{w1-m}-\eqref{w2-m}.
 Let  $(U_h^n,\Sigma_h^n)$ be the solution of \eqref{w1a-BE}-\eqref{w2a-BE} with $U_h^0=P_hu_0$. 
 Then, for $u_0 \in \mathcal{D}(\cL^\nu)$,  $\nu\in  (0,1]$,  the following error estimate holds for $t_n>0$:
\begin{equation}\label{es1-f-cc}
 \|U_h^n-u(t_n)\|\leq  c (\tau t_n^{\alpha\nu-1} + h^{1+\ell}t_n^{-\alpha(1-\nu)}),\quad \ell=0,1,
\end{equation}
where $c$ is independent of $h$ and $\tau$.
\end{theorem}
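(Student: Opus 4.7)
The strategy is to reduce the fully discrete mixed FE scheme to the abstract framework already analyzed in Section \ref{sec:FD}, and then invoke Theorem \ref{thm:fully-1} directly. This is exactly parallel to how Theorem \ref{thm:mixed-sm} was obtained from Theorem \ref{thm:semi} in the semidiscrete case.

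First I would eliminate the flux variable $\Sigma_h^n$ from the coupled system \eqref{w1a-BE}-\eqref{w2a-BE}. Rewriting \eqref{w1a-BE} as $-(\nabla\cdot\Sigma_h^n,\chi_h) = (g^n,\chi_h)$ for all $\chi_h\in V_h$, with the right-hand side $g^n := f(U_h^n) - \dop(U_h^n-U_h^0)$, and combining with \eqref{w2a-BE}, we see that the pair $(U_h^n,\Sigma_h^n)$ solves the mixed elliptic problem \eqref{w1-b}-\eqref{w2} with data $g^n$. By the definition of $T_h$ via $T_h g^n = u_h$, this is equivalent to
\begin{equation*}
T_h \dop(U_h^n - U_h^0) + U_h^n = T_h f(U_h^n), \quad n\ge 1, \quad U_h^0 = P_h u_0,
\end{equation*}
which is precisely the abstract backward Euler CQ scheme \eqref{C4}.

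Next I would check that the operator $T_h$ produced by the mixed formulation fits the hypotheses of Section \ref{sec:AB}. By Lemma \ref{lem:TT}, $T_h$ is selfadjoint, positive semidefinite on $L^2(\Omega)$ and positive definite on $V_h$; the identity $T_hP_h=T_h$ holds because the right-hand side in \eqref{w1-b} only tests against elements of $V_h$; and the approximation property \eqref{ass-1} is satisfied with $\gamma = 1+\ell$ for $\ell=0,1$. Consequently, all the assumptions required to apply the abstract fully discrete theory are in place, and the contraction/solvability restriction on $\tau$ in the nonlinear fixed-point argument for \eqref{C4} translates directly to a threshold $\tau_0$ here.

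Finally, choosing $\tau_0$ so that $\tau_0^\alpha ML<1$ and $\tau_0^\alpha LB<1$, I would apply Theorem \ref{thm:fully-1} with $\gamma=1+\ell$. Since $t_n^{\alpha(\nu-1)} = t_n^{-\alpha(1-\nu)}$, this yields exactly the bound
\begin{equation*}
\|U_h^n-u(t_n)\|\leq c\bigl(\tau t_n^{\alpha\nu-1} + h^{1+\ell}\,t_n^{-\alpha(1-\nu)}\bigr),
\end{equation*}
for $t_n>0$ and $\nu\in(0,1]$, as claimed. The only genuinely non-routine step is the algebraic reduction of the saddle-point system to the abstract form involving $T_h$; once that identification is made, the fully discrete error estimate is an immediate consequence of the abstract theorem, with no further regularity assumption on the solution needed.
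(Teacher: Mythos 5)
Your proposal is correct and follows essentially the same route as the paper: both reduce the saddle-point system to the abstract operator equation $T_h\dop(U_h^n-U_h^0)+U_h^n=T_hf(U_h^n)$ via Lemma \ref{lem:TT} and then invoke Theorem \ref{thm:fully-1} with $\gamma=1+\ell$. Your write-up merely makes explicit the elimination of the flux variable and the verification of the hypotheses, which the paper leaves implicit.
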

%
%
\begin{proof} In view of Lemma \ref{lem:TT}, the fully semidiscrete problem is rewritten as
\begin{equation}\label{T_h-in-cc}
T_h\dop U_h^n+U_h^n= T_h\dop U_h^0+T_hf(U_h^n), \quad n\geq 1,\quad U_h^0=P_hu_0.
\end{equation}
Recalling again  the definition of the continuous operator $T$,  the estimate \eqref{es1-f-cc}  follows immediately from Theorem \ref{thm:fully-1}. 
\end{proof}

We conclude this section by showing error estimates for the linear problem with $f=0$. 
The results are intended to complete the semidiscrete mixed FE error analysis  presented in \cite{Karaa-2018}.


\begin{theorem} \label{thm:mixed-sm-f}
 Let   $(u,\bs)$ be the solution \eqref{w1-m}-\eqref{w2-m} with $f=0$.
 Let  $(U_h^n,\Sigma_h^n)$ be the solution of \eqref{w1a-BE}-\eqref{w2a-BE} with $f=0$ and $U_h^0=P_hu_0$. 
 Then, for $u_0 \in \mathcal{D}(\cL^\nu)$,  $\nu\in  [0,1]$,  the following error estimate holds for $t_n>0$:
\begin{equation}\label{es1-f}
 \|U_h^n-u(t_n)\|\leq
 c (\tau t_n^{\alpha\nu-1} + h^{1+\ell} t_n^{-\alpha(1-\nu)})\|\cL^\nu u_0\|,\quad \ell=0,1.
\end{equation}
Furthermore, in the case $\nu=0$, 
\begin{equation}\label{es1-fs}
 \|\Sigma_h^n-\bs(t_n)\|\leq c (\tau t_n^{-\alpha/2-1}+ht_n^{-\alpha})  \|u_0\|,\quad \ell=1,
\end{equation}
where $c$ is independent of $h$ and $\tau$.
\end{theorem}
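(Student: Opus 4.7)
The scalar estimate \eqref{es1-f} is an immediate consequence of Theorem \ref{thm:fully-1}. By Lemma \ref{lem:TT}, the mixed operator $T_h$ satisfies all the structural hypotheses of Section \ref{sec:AB} with $\gamma = 1+\ell$, and the fully discrete mixed scheme \eqref{w1a-BE}-\eqref{w2a-BE} with $f=0$ is equivalent to the scalar operator form \eqref{T_h-in-cc}. So Theorem \ref{thm:fully-1} applies verbatim.

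For the flux estimate \eqref{es1-fs} I would split
\[
\Sigma_h^n - \bs(t_n) = \bigl(\Sigma_h^n - \bs_h(t_n)\bigr) + \bigl(\bs_h(t_n) - \bs(t_n)\bigr).
\]
The semidiscrete piece would be handled by Theorem \ref{thm:mixed-sm} with $\nu = 0$, $\ell = 1$, but with the observation that in the linear homogeneous case ($f=0$) the proof of Theorem \ref{thm:semi} simplifies because the integral $\int_0^t \bar E_h(t-s)(f(u(s))-f(u(t)))\,ds$ responsible for the logarithmic factor is absent. This gives the log-free scalar bound $\|u_h(t)-u(t)\| \leq ch^2 t^{-\alpha}\|u_0\|$, and the inverse-inequality argument from the proof of Theorem \ref{thm:mixed-sm} then yields $\|\bs_h(t_n)-\bs(t_n)\| \leq cht_n^{-\alpha}\|u_0\|$, which is the second term in \eqref{es1-fs}.

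The time-discretization error is where the genuinely new work lies. I would follow the Laplace/CQ framework used in Theorem \ref{thm:BE}. Writing $\bs_h = -S_h\cop u_h$ and using $T_h\cop u_h + u_h = 0$, a direct Laplace computation gives
\[
\hat{\bs}_h(z) = z^{-1}G_S(z)u_0, \qquad G_S(z) := S_h(z^{-\alpha}I+T_h)^{-1}.
\]
By the associativity of the backward-Euler CQ, see \eqref{s1}, the fully discrete flux admits the analogous representation $\Sigma_h^n = G_S(\partial_\tau)u_0$, so
\[
\Sigma_h^n - \bs_h(t_n) = \bigl(G_S(\partial_\tau) - G_S(\partial_t)\bigr) u_0.
\]
The crucial ingredient is the resolvent bound $\|G_S(z)\| \leq c|z|^{\alpha/2}$ on $\Sigma_\theta$. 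This follows from the mixed elliptic identity $\|S_h g\|^2 = (g, T_h g) \leq \|g\|\,\|T_h g\|$, obtained by testing \eqref{w1-b} with $u_h$ and \eqref{w2} with $\bs_h$; combined with \eqref{res1} and \eqref{res2},
\[
\|G_S(z)v\|^2 \leq \|(z^{-\alpha}I+T_h)^{-1}v\|\,\|T_h(z^{-\alpha}I+T_h)^{-1}v\| \leq cM(1+M)|z|^{\alpha}\|v\|^2.
\]
Lemma \ref{lem:Lubich} with $\mu=-\alpha/2$ and $\sigma=1$ (the constant input $\varphi\equiv u_0$) then yields $\|\Sigma_h^n - \bs_h(t_n)\| \leq c\tau t_n^{-\alpha/2-1}\|u_0\|$, and combining with the semidiscrete bound produces \eqref{es1-fs}.

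The main obstacle is the sharp resolvent estimate $\|G_S(z)\| \leq c|z|^{\alpha/2}$. Its half-power improvement over the scalar resolvent bound $\|(z^{-\alpha}I+T_h)^{-1}\| \leq M|z|^{\alpha}$ is precisely what converts the generic CQ rate $\tau t_n^{-1}$ into the optimal temporal decay $\tau t_n^{-\alpha/2-1}$; attempting to deduce the flux estimate via an inverse inequality applied to $\|U_h^n - u(t_n)\|$ would only recover the weaker $\tau h^{-1}t_n^{-1}$, which fails the target bound. A more routine but still delicate secondary point is justifying the identification $\Sigma_h^n = G_S(\partial_\tau)u_0$ rigorously within the CQ calculus, which rests on the operator-algebraic relations \eqref{s1} together with the Laplace representation of the semidiscrete flux derived above.
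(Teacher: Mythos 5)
Your proposal is correct, and for the flux estimate it takes a genuinely different route from the paper. The paper proves \eqref{es1-fs} by an energy argument at the discrete level: it subtracts the semidiscrete from the fully discrete equations, tests with $\chi_h=U_h^n-u_h^n$ and $\bv_h=\Sigma_h^n-\bs_h^n$, and after a cancellation arrives at $\|\Sigma_h^n-\bs_h^n\|^2\leq \big(\|(\widetilde G(\partial_\tau)-\widetilde G(\partial_t))u_0\|+\|(\dop-\op)u_0\|\big)\,\|U_h^n-u_h^n\|$ with $\widetilde G(z)=z^{\alpha}(z^{-\alpha}I+T_h)^{-1}T_h$; the first factor is $O(\tau t_n^{-\alpha-1})$ and the second is $O(\tau t_n^{-1})$ by \eqref{s8}, and the square root of the product gives $\tau t_n^{-\alpha/2-1}$. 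You instead write the flux directly in the operational calculus as $\Sigma_h^n=G_S(\partial_\tau)u_{0h}$ with $G_S(z)=S_h(z^{-\alpha}I+T_h)^{-1}$, prove the resolvent bound $\|G_S(z)\|\leq c|z|^{\alpha/2}$ from the mixed identity $\|S_hg\|^2=(g,T_hg)$ together with \eqref{res1}--\eqref{res2}, and apply Lemma \ref{lem:Lubich} once with $\mu=-\alpha/2$, $\sigma=1$. Both arguments hinge on the same quadratic structure (the square root of a product of an $|z|^{\alpha}$-type and an $O(1)$-type bound is exactly where the half power comes from), but yours isolates it as a clean resolvent estimate for the flux solution operator and stays entirely within the Laplace/CQ framework, which is arguably more systematic and would generalize more readily; the paper's version avoids introducing $G_S$ and reuses the already-proved scalar CQ bounds. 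One small correction: for the scalar estimate \eqref{es1-f} you should invoke Theorem \ref{thm:BE} (linear homogeneous case, valid for $\nu\in[0,1]$), as the paper does, rather than Theorem \ref{thm:fully-1}, which is stated for the semilinear problem and only for $\nu\in(0,1]$, whereas the present theorem explicitly includes $\nu=0$. Your observation that the logarithmic factor of Theorem \ref{thm:mixed-sm} disappears when $f=0$ (the error reduces to $F_h(t)u_0$, bounded by \eqref{0-p}) is correct and consistent with the bound $\|\bs_h(t_n)-\bs(t_n)\|\leq cht_n^{-\alpha}\|u_0\|$ that the paper imports from \cite{Karaa-2018}.
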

%
%
\begin{proof} 
The first estimate is given in Theorem \ref{thm:BE}. 
To derive \eqref{es1-fs}, we use \eqref{w1a-m}-\eqref{w2a-m} and \eqref{w1a-BE}-\eqref{w2a-BE} so that, with $u_h^n=u_h(t_n)$ and $\bs_h^n=\bs_h(t_n)$, we have
\begin{eqnarray*}
(\dop (U_h^n-u_h^n), \chi_h)- (\nabla\cdot (\Sigma_h^n-\bs_h^n), \chi_h) &=& -((\dop-\op) (u_h^n-u_0),\chi_h) \;\;\;\forall \chi_h \in V_h,\\
(\Sigma_h^n-\bs_h^n, \bv_h) +  (U_h^n-u_h^n,\nabla\cdot \bv_h) &=& 0 \;\;\;\forall \bv_h \in \bV_h.
\end{eqnarray*}
Choose $\chi_h=U_h^n-u_h^n$ and $\bv_h=\Sigma_h^n-\bs_h^n$ so that
\begin{equation}\label{ms-1}
 \|\Sigma_h^n-\bs_h^n\|^2+ (\dop (U^n_h-u_h^n),U^n_h-u_h^n)= -((\dop-\op)(u_h^n-u_0),U^n_h-u_h^n).
\end{equation}
From  \eqref{T_h-i3} and \eqref{T_h-in-cc}, we see that $U_h^n-u_h^n=G(\partial_\tau)u_0-G(\partial_t)u_0$, where  
$G(z)=(z^{-\alpha}I+T_h)^{-1}T_h$. Hence,
\begin{eqnarray*}
\dop(U^n_h-u_h^n)&=& (\dop U^n_h - \op u^n_h) - (\dop-\op) u^n_h \\
                 &=& (\tilde G(\partial_\tau)- \tilde G(\partial_t))u_0 - (\dop-\op) u^n_h,
\end{eqnarray*}
where $\widetilde G(z)=z^\alpha G(z)$. Inserting this result in \eqref{ms-1}, we get
\begin{equation}\label{ms-1a}
 \|\Sigma_h^n-\bs_h^n\|^2+ ((\widetilde G(\partial_\tau)- \widetilde G(\partial_t))u_0,U^n_h-u_h^n)= ((\dop-\op)u_0,U^n_h-u_h^n),
\end{equation}
and therefore
$$
\|\Sigma_h^n-\bs_h^n\|^2 \leq \left(\|(\widetilde G(\partial_\tau)- \widetilde G(\partial_t))u_0 \|+\|(\dop-\op)u_0\|\right)
\|U^n_h-u_h^n \|.
$$
Applying Lemma \ref{lem:Lubich} (with $\mu=-\alpha$, $\sigma=1$), we obtain
$\|(\dop-\op)u_0\| \leq c\tau t_n^{-\alpha-1}\|u_0\|$. Similarly, $\|\tilde G(z)\|\leq c|z|^{\alpha}\, \forall z\in \Sigma_\theta$ implies that $\|(\tilde G(\dop)- \tilde G(\op))u_0\|\leq c\tau t_n^{-\alpha-1}\|u_0\|$. The estimate \eqref{es1-fs} follows now since $\|\bs_h(t_n)-\bs(t_n)\|\leq cht^{-\alpha}\|u_0\|$, see \cite{Karaa-2018}. This completes the proof.
\end{proof}

\begin{remark}
For  problem \eqref{a} with  $\cL u = -\mbox{div} [A(x)\nabla u]+\kappa(x)u$, one may consider an expanded mixed FE method  by setting
${\bf q}=\nabla u$ and $\bs = A{\bf q}$, see \cite{Chen-1998}. Thus, the  scalar unknown $u$, its gradient and its flux $\bs$ are treated explicitly. The new mixed Galerkin method is to find   $(u,{\bf q},\bs):(0,T_0]\to  V\times \bV\times \bV$ such that
\begin{eqnarray*}\label{w1-m-ne}
(\cop u, \chi)- (\nabla\cdot \bs, \chi)+(\kappa u, \chi) &=& (f(u),\chi) \;\;\;\forall \chi \in V,\\
\label{w2-m-ne}
({\bf q}, \bv) +  (u,\nabla\cdot \bv) &=& 0\ \;\;\;\forall \bv \in \bV, \\
\label{w3-m-ne}
(\bs, \bv) - (A {\bf q},\bv) &=& 0\ \;\;\;\forall \bv \in \bV, 
\end{eqnarray*}
with $u(0)=u_0$. We define the semidiscrete approximation as the triple $(u_h,{\bf q}_h,\bs_h):(0,T_0]\to  V_h\times \bV_h\times \bV_h$  satisfying
\begin{eqnarray*}\label{w1-m-ne-p}
(\cop u_h, \chi_h)- (\nabla\cdot \bs_h, \chi_h)+(\kappa u_h, \chi_h) &=& (f(u_h),\chi_h) \;\;\;\forall \chi_h \in V_h,\\
\label{w2-m-ne0-p}
({\bf q}_h, \bv_h) +  (u_h,\nabla\cdot \bv_h) &=& 0\ \;\;\;\forall \bv_h \in \bV_h, \\
\label{w3-m-ne-p}
(\bs_h, \bv_h) - (A {\bf q}_h,\bv_h) &=& 0\ \;\;\;\forall \bv_h \in \bV_h, 
\end{eqnarray*}
with $u_h(0)=P_hu_0$. Then, in order to extend  the previous analysis to the  expanded mixed FE method, one 
may prove a result similar to Lemma \ref{lem:TT}. This can be achieved by using the approximation properties derived in \cite[Theorem 4.8]{Chen-1998}.

\end{remark}

\section {Numerical Experiments} \label{sec:numer}
In this part, we present  numerical examples to verify the theoretical results. We consider problem \eqref{a} and its mixed form  \eqref{w1-m}-\eqref{w2-m}  in the unit square $\Omega=(0,1)^2$ with
$\cL=-\Delta$. We choose $f(u)=\sqrt{1+u^2}$ and perform numerical tests with
the following smooth and nonsmooth initial data: 

\begin{itemize}
\item [(a)] $u_0(x,y)=xy(1-x)(1-y)\in \dot H^2(\Omega)$.
\item [(b)] $u_0(x,y)=\chi_D(x,y)\in \dot H^{1/2-\epsilon}(\Omega)$, $\epsilon>0$, 
\end{itemize}
where $\chi_D(x,y)$ denotes the characteristic function of the domain  $D:=\{(x,y)\in\Omega:x^2+y^2\leq 1\}$.

Since the error analysis of the standard conforming Galerkin FEM has thoroughly been  investigated, see for instance \cite{JLZ2016}, we shall mainly focus on spatial errors from  nonconforming and mixed FEMs. The backward Euler CQ method is used for the time discretization, 

In the computation, we divide the domain $\Omega$ into regular right triangles with $M$ equal subintervals of length $h=1/M$ on each side of the domain. We choose $\alpha = 0.5$ and the final time $T=0.1$. Since the exact solution is difficult to obtain, we compute  a reference solution on a refined mesh in each case. All the numerical results are obtained using FreeFEM++ \cite{FreeFem++}.

To check the spatial discretization errors, we display in  Table \ref{table:1} the $L^2(\Omega)$-norm of the errors  in the discrete solutions in cases (a) and (b), computed by the Crouzeix-Raviart nonconforming  FEM $(P1nc)$ based on the numerical scheme \eqref{H1} and its analogue used with the standard conforming FEM. 
From the table, we observe a convergence rate of order  $O(h^2)$  for smooth and nonsmmoth initial data,
which confirms the theoretical convergence rates. Similar results have been obtained with different values 
of $\alpha$. We notice that the nonconforming method yields slightly better results.


\begin{table}[h]
{\footnotesize
\begin{center}
\caption{$L^2$-errors for cases (a) and (b); $P1$ and $P1nc$ FEMs.}
\label{table:1}
\begin{tabular}{|r|cc | cc| cc | cc|}
\hline
 & \multicolumn{4}{|c|}{$P1$ (Conforming)}& \multicolumn{4}{|c|}{$P1nc$ (Crouzeix-Raviart)} 
\\
\hline 
 & \multicolumn{2}{|c|}{Problem (a)} & \multicolumn{2}{|c|}{Problem (b)} &
 \multicolumn{2}{|c|}{Problem (a)} & \multicolumn{2}{|c|}{Problem (b)}
\\
\hline
$M$ & $L^2$-error  & Rate& $L^2$-error  & Rate & $L^2$-error  & Rate& $L^2$-error  & Rate
 \\
\hline
8  & 1.83e-3 &      &  1.82e-3 &      &  9.26e-4  &        & 9.39e-4 &\\
16 & 4.47e-4 & 1.97 &  4.64e-4 & 1.98 &  2.43e-4  &  1.93  & 2.46e-4 & 1.93\\
32 & 1.15e-4 & 2.02 &  1.14e-4 & 2.01 &  6.15e-5  &  1.98  & 6.23e-5 & 1.99\\
64 & 2.73e-5 & 2.06 &  2.71e-5 & 2.05 &  1.52e-5  &  2.01  & 1.54e-5 & 2.02\\
\hline
\end{tabular}
\end{center}
}
\end{table}


For the mixed FEM, we perform the computation using the lowest-order Raviart-Thomas FE spaces
$(RT0,P0)$ and  $(RT1,P1dc)$, where $P0$ and $P1dc$ denote the sets of piecewise constant and linear functions, respectively. Here, we adopt the notation used in FreeFEM++. In our tests, we include  the case $\alpha=1$ (i.e., the parabolic case) in order to investigate the effect of the solution regularity.

\begin{table}[t]
{\footnotesize
\begin{center}
\caption{$L^2$-errors and convergence rates for case (a); mixed FEMs.}
\label{table:2}
\begin{tabular}{|r|cccc|}
\hline
$M$&
$\|u-u_h\|_{L^2}$ & Rate& $\|{\boldsymbol\sigma}-{\boldsymbol\sigma}_h\|_{L^2}$ & Rate
\\
\hline
\multicolumn{5}{|c|}{$(RT1,P1dc),\; \alpha=0.5$}\\
\hline
 8 & 4.63e-4  &      & 3.31e-3 &  \\
16 & 1.11e-4  & 2.05 & 8.23e-4 & 1.86 \\
32 & 2.49e-5  & 2.16 & 2.49e-4 & 1.89 \\
64 & 6.28e-6  & 1.99 & 5.48e-5 & 1.96 \\
\hline
\multicolumn{5}{|c|}{$(RT1,P1dc),\; \alpha=1$}\\
\hline
8  & 4.61e-4 &      & 3.32e-3 &  \\
16 & 1.10e-4 & 2.06 & 9.24e-4 & 1.86 \\
32 & 2.47e-5 & 2.15 & 2.49e-4 & 1.90 \\
64 & 6.24e-6 & 1.99 & 6.45e-5 & 1.96 \\
\hline
\multicolumn{5}{|c|}{$(RT0,P0),\; \alpha=0.5$} \\
\hline
 8 & 5.41e-3 &      & 2.73e-2 &  \\
16 & 2.71e-3 & 1.00 & 1.40e-2 & 0.97 \\
32 & 1.35e-3 & 1.00 & 7.04e-3 & 0.97 \\
64 & 6.81e-4 & 1.00 & 3.52e-3 & 1.00 \\
\hline
\end{tabular}
\end{center}
}
\end{table}

\begin{table}[ht]
{\footnotesize
\begin{center}
\caption{$L^2$-errors and convergence rates for case (b); mixed FEMs.}
\label{table:3}
\begin{tabular}{|r|cccc|}
\hline
$M$&
$\|u-u_h\|_{L^2}$ & Rate& $\|{\boldsymbol\sigma}-{\boldsymbol\sigma}_h\|_{L^2}$ & Rate
\\
\hline
\multicolumn{5}{|c|}{$(RT1,P1dc),\; \alpha=0.5$}\\
\hline
 8 & 1.21e-3 &      & 8.83e-3 &  \\
16 & 2.93e-4 & 2.04 & 2.69e-3 & 1.71 \\
32 & 6.72e-5 & 2.12 & 8.36e-4 & 1.70 \\
64 & 1.62e-5 & 2.05 & 2.54e-5 & 1.71 \\
\hline
\multicolumn{5}{|c|}{$(RT1,P1dc),\; \alpha=1$}\\
\hline
8  & 1.35e-3 &      & 5.40e-3 &  \\
16 & 3.21e-4 & 2.07 & 1.39e-3 & 1.97 \\
32 & 7.27e-5 & 2.14 & 3.59e-4 & 1.97 \\
64 & 1.57e-5 & 2.19 & 9.39e-5 & 1.94\\
\hline
\multicolumn{5}{|c|}{$(RT0,P0),\; \alpha=0.5$} \\
\hline
8  & 1.37e-2 &      & 6.60e-2 &  \\
16 & 6.92e-3 & 0.99 & 3.38e-2 & 0.97 \\
32 & 3.46e-3 & 1.00 & 1.07e-2 & 0.99 \\
64 & 1.74e-3 & 1.00 & 8.52e-3 & 1.00 \\
\hline
\end{tabular}
\end{center}
}
\end{table}


The numerical results for problem (a) are given in  Table \ref{table:2}. They show a convergence rate of order $O(h)$  in the case of $(RT0,P0)$ and   of order  $O(h^2)$ in the case of $(RT1,P1dc)$ for both values $\alpha=0.5$ and $\alpha=1$,  which agrees well with the theoretical results.

In  Table \ref{table:3}, we present the numerical results for problem (b). The results reveal that the convergence rates are maintained in the cases of $(RT0,P0)$ with $\alpha=0.5$ and $(RT1,P1dc)$ with $\alpha=1$, which agrees well with our convergence analysis. By contrast, the convergence rate reduces to $O(h^{1.7})$ in the case of $(RT1,P1dc)$ with $\alpha=0.5$. This confirms our prediction that the optimal 
$O(h^{2})$ convergence rate is no longer attainable when the initial data is not smooth. Note  that, since the numerical results do not show  a convergence rate of $O(h)$, this may be seen as an unexpected result.  
However, as the initial data $u_0$ has some smoothness, $u_0$ is roughly in $\dot H^{1/2-\epsilon}(\Omega)$ for some $\epsilon>0$, 
the numerical results do not contradict our theoretical findings. Indeed, the smoothness of the particular initial data $u_0$ could then have  a positive effect on the convergence rate. This fact has also been observed in the study of 
a homogeneous linear time-fractional problem with time-dependent coefficients \cite{KP-2019}.

\end{document}